\documentclass[12pt]{article}        
        
\usepackage{latexsym,amsfonts,amsmath,epsfig,tabularx,amsthm,dsfont,enumerate} 
        
        
\setlength{\textwidth}{6.5in} \setlength{\oddsidemargin}{0in}        
\setlength{\evensidemargin}{0in}        
        
\theoremstyle{definition}        
\newtheorem{thm}{Theorem}        
\newtheorem{rem}[thm]{Remark}        
        
\newtheorem{prop}[thm]{Proposition}          
\newtheorem{cor}[thm]{Corollary}

\newcommand\reals{\mathbb{R}}        
\newcommand\R{{\mathbb{R}}}        
\newcommand\C{{\mathbb{C}}}        
\renewcommand\natural{\mathbb{N}}        
\newcommand\nat{\mathbb{N}}        
\newcommand\N{\mathbb{N}}        
\newcommand\Z{\mathbb{Z}}

\newcommand\eps{\varepsilon}        
\renewcommand\a{\alpha}

\newcommand\eu{{\rm e}}

\newcommand{\e}{\varepsilon}        
\newcommand{\wt}{\widetilde}        
\newcommand{\widebar}[1]{\mbox{\kern1pt\hbox{\vbox{\hrule height 0.5pt \kern0.25ex        
        \hbox{\kern-0.05em \ensuremath{#1 }\kern-0.05em}}}}\kern-0.1pt}        
\newcommand\il{\left<}        
\newcommand\ir{\right>}        
        
\newcommand{\dx}{\, {\rm d} x }        
\newcommand{\sgn}{\operatorname{sgn}}         
\newcommand{\abs}[1]{\left\vert #1 \right\vert}        
\newcommand{\norm}[1]{\left\Vert #1 \right\Vert}

\newlength{\fixboxwidth}        
\setlength{\fixboxwidth}{\marginparwidth}        
\addtolength{\fixboxwidth}{-7pt}        
        
\hfuzz=2pt  
        
\title{Complexity of Oscillatory Integration        
\\ for Univariate Sobolev Spaces}        
\author{Erich Novak\footnote{This        
author was partially supported by the DFG-Priority Program 1324.},         
Mario Ullrich\footnote{This        
author was partially supported by the DFG GRK 1523.}\\        
Mathematisches Institut, Universit\"at Jena\\        
Ernst-Abbe-Platz 2, 07743 Jena, Germany\\        
email: erich.novak@uni-jena.de, ullrich.mario@gmail.com\\        
\qquad        
Henryk Wo\'zniakowski\footnote{This author was partially        
supported by the National Science        
Foundation and by the National Science Centre, Poland, 
based on the decision DEC-2013/09/B/ST1/04275.  
}\\     
Department of Computer Science, Columbia University,\\        
New York, NY 10027, USA, and\\        
Institute of Applied Mathematics, University of Warsaw\\        
ul. Banacha 2, 02-097 Warszawa, Poland\\        
email:\ henryk@cs.columbia.edu}        
        
\begin{document}        
\maketitle        
\begin{abstract}      
We analyze univariate oscillatory integrals for       
the standard Sobolev spaces      
$H^s$ of periodic and non-periodic      
functions with an arbitrary integer $s\ge1$.      
We find matching lower and upper bounds on the minimal worst case      
error of algorithms that use~$n$ function or derivative values.      
We also find sharp bounds on the information complexity       
which is the minimal $n$ for which the       
absolute or normalized error is at most~$\e$.      
We show surprising relations between the       
information complexity and the oscillatory weight.      
We also briefly consider the case of $s=\infty$.     
\end{abstract}      
      
\section{Introduction}        
 
We study the approximate computation of univariate oscillatory integrals      
(Fourier coefficients)  
\begin{equation}     \label{eq:problem}  
I_k (f) = \int_0^1 f(x) \,\eu^{-2\pi\,i\,kx} \, {\rm d} x,        
\qquad i=\sqrt{-1},        
\end{equation}         
where $k \in \Z$ and $f \in H^s$.  
We improve the known upper bounds and also prove matching lower bounds,  
i.e., we study the complexity of this computational problem.  
By $H^s$ we mean the standard Sobolev (Hilbert) space;  
we study spaces of periodic and non-periodic     
functions defined on $[0,1]$       
with an arbitrary integer $s\ge1$.  
We usually consider a finite $s$ but we also briefly consider     
the case of $s=\infty$. Although we consider arbitrary      
integers~$k$, our emphasis is for large $|k|$ and  
we explain our results here       
only for such $k$.      
      
We compute  
the initial error (the norm of $I_k$)  
as well as the worst case error of our algorithms  
exactly. This is possible since  
we assume that $k$ is an integer.  
For the periodic case the initial error is of order $|k|^{-s}$,        
whereas for the non-periodic case it is independent of $s$ and is      
roughly $|k|^{-1}$. This means that the initial error for the       
periodic case is much smaller for large $s$.       
For $s=\infty$, the periodic case  
leads to the space of only constant functions     
and the problem becomes trivial since  
the initial error is zero for all $k\not=0$.     
The non-periodic case is still reasonable with the  
initial error roughly $|k|^{-1}$.       
      
For a finite $s$ and the periodic case, we prove that       
an  algorithm that uses $n$ function values at equally      
spaced points is nearly optimal,  
and its worst case error is bounded by $C_s(n+|k|)^{-s}$       
with an exponentially small $C_s$ in $s$.       
For the non-periodic case, we first compute      
successive derivatives up to order $s-1$ at the end-points      
$x=0$ and $x=1$. These derivatives values are used to periodize the      
function and this allows us to obtain similar error bounds like for the      
periodic case.       
Asymptotically in $n$, the worst case error      
of the algorithm is of order $n^{-s}$ independent of $k$       
for both periodic and non-periodic cases.       
      
Near optimality of this algorithm is shown by proving a lower bound       
of order $(n+|k|)^{-s}$ which holds for all algorithms that use       
the values of function and derivatives up to order $s-1$ at $n$      
arbitrarily chosen points from $[0,1]$. We     
establish the lower bound by constructing a periodic function that     
vanishes with all its derivatives up to order $s-1$ at the points       
sampled by a given algorithm,     
belongs to the unit ball       
of the space $H^s$, and its oscillatory integral is of order      
$(n+|k|)^{-s}$.       
      
For $s=\infty$, we provide two  
algorithms which compute successive derivatives     
and/or function values at equally spaced points.     
The worst case error of one of these algorithms is      
super exponentially small in $n$.      
For $s=\infty$,    
we do not have a matching lower bound.     
       
We consider the absolute and normalized error criteria.      
For the absolute error criterion, we want to find the information      
complexity which is defined as the smallest $n$ for      
which the $n$th minimal error is at most $\e\in(0,1)$, whereas for the      
normalized error criterion, the information complexity is       
the smallest $n$ for which      
the $n$th minimal error reduces the initial error by a factor $\e$.       
For a finite $s$ we obtain the following results.      
\begin{itemize}      
\item       
For the absolute error criterion and the periodic case, the      
information complexity is      
zero if $\e>1/(2\pi|k|)^s$ and otherwise is roughly      
$\e^{-1/s}-|k|$. This means that in this case       
the problem becomes easier for large $|k|$.       
\item For the normalized error criterion and for the periodic case,       
the information complexity  is       
of order $|k|\,\e^{-1/s}$. Hence, in this case the problem becomes      
harder for larger $|k|$.       
\item       
For the absolute error criterion and the non-periodic case,      
the information complexity is  
zero if $\e\ge1.026/(2\pi|k|)$ and otherwise is      
roughly lower bounded by $\e^{-1/s}-|k|$ and upper bounded by      
$\e^{-1/s} +2s-1-|k|$. As for the periodic case, the problem becomes      
easier for large $k$.        
\item      
For the normalized error criterion and the non-periodic case,      
the information complexity is of order $|k|^{1/s}\,\e^{-1/s}$ for      
very small $\e$.       
In this case, the dependence on $|k|$ is more lenient than for      
the periodic case especially if $s$ is large.      

The dependence on $|k|$ is quite intriguing if      
$|k|$ goes to infinity.       
For $s=1$ and fixed~$\e$, the information complexity goes to      
infinity linearly with $|k|$. However, the situation is quite      
different for $s\ge2$. Then       
for large $|k|$ the information complexity is bounded by $2s$     
if $\e$ is fixed or if $\e$ tends to zero like $|k|^{-\eta}$      
with $\eta\in(0,s-1)$.      
\end{itemize}      
     
For $s=\infty$, we obtain only upper bounds on the information complexity.     
For $\e$ tending to zero they are roughly $\ln(\e^{-1})/\ln(\ln(\e^{-1}))$      
independent of $|k|$.     
 
There are several recent papers         
about the approximate computation of highly oscillatory  
univariate integrals      
with the weight $\exp(2\pi\,i\,kx)$, where $x\in[0,1]$ and $k$  
is an integer (or $k \in \R$) which  is      
assumed to be large in the absolute sense, see      
Dom\'\i nguez, Graham and Smyshlyaev~\cite{DGS11},  
Iserles and N\o rsett~\cite{IN05},      
Melenk~\cite{Me10}, Chapter 3 of Olver~\cite{Ol08}, and      
Huybrechs and Olver~\cite{HO09} for a survey.      
Some authors mainly  
present asymptotic error bounds as $k$ goes to infinity      
for algorithms that use $n$ function or derivative values.  
It is usually done       
for $C^\infty$ or even analytic functions.        
There are not too many papers that contain explicit         
error bounds depending on $k$ and~$n$.  
Examples include~\cite{DGS11,Me10,Ol08}.     
All these papers also contain pointers to the further relevant literature. 
        
 
There is a discussion in the literature  
concerning the question whether ``high oscillation'', i.e.,          
large $|k|$, means that the problem is ``easy'' or ``difficult''.         
For this question it is useful  
to distinguish         
between the absolute and normalized error criteria and, in addition,  
it is important to know the initial errors.  
 
The absolute error      
criteria means that the error is at most $\e$, whereas the normalized       
error criteria means that the error is at most $\e$ times the initial      
error. The initial error is the error of the zero algorithm and      
only depends on the formulation of the problem. It turns out that in      
the setting of our paper the initial error is small for large $|k|$      
which makes the absolute error criterion easier than the      
normalized error criterion. We show that the answer to the question       
whether the problem is easy or difficult for large $|k|$ depends on      
the error criterion we choose as well as on the relation between      
$|k|$, $\e$ and the assumed smoothness of integrands.         
      
We did not find a computation of         
the initial error in the literature         
and we did not find lower bounds on the error of algorithms that use       
$n$ function or derivatives values. In this paper, we present the formulas      
for the initial error as well as matching lower and      
upper bounds on the minimal errors of      
algorithms.      
 
\section{Preliminaries}         
        
We study the Sobolev space $H^s$ for a finite $s\in\N$, i.e.,         
\begin{equation}         
H^s = \{ f: [0,1] \to \C \mid         
f^{(s-1)} \hbox{ is abs. cont., } f^{(s)} \in L_2 \}         
\end{equation}        
with the inner product        
\begin{equation} \label{eq:inner_product}      
\begin{split}      
\il f,g \ir_{s} \;&=\; \sum_{\ell=0}^{s-1} \int_0^1 f^{(\ell)}(x) \dx \;      
        \int_0^1 \widebar{g^{(\ell)}(x)} \dx \,+\, \int_0^1 f^{(s)}(x)\,      
\widebar{ g^{(s)}(x)}  \dx \\      
&=\; \sum_{\ell=0}^{s-1} \il f^{(\ell)},1\ir_0      
\widebar{\il g^{(\ell)},1\ir_0}       
                        \,+\, \il f^{(s)}, g^{(s)} \ir_0,      
\end{split}      
\end{equation}      
 where         
$\il f,g \ir_0 = \int_0^1 f(x)\, \widebar{g(x)} \, {\rm d} x$,      
and norm $\|f\|_{H^s}=\il f,f \ir_{s}^{1/2}$.     
We later comment on the space $H^\infty$ for $s=\infty$.         
     
\begin{rem}     
Probably the most standard inner product on the     
Sobolev space $H^s$ is     
\begin{equation}\label{newnorm1}     
\il f,g\ir_{s,*}=\sum_{\ell=0}^s\il f^{(\ell)},g^{(\ell)}\ir_0.     
\end{equation}     
Obviously, the norms $\|\cdot\|_{H^s}$ and    
$\|\cdot\|_{H^s_*}=\il f,f \ir_{s,*}^{1/2}$     
are equivalent. What is more surprising, the bounds on the    
embedding constants  are     
independent of $s$ and close to one. More precisely, we have      
\begin{equation}\label{equivalence}     
\tfrac{12}{13}\,\|f\|_{H^s_*}\le     
\|f\|_{H^s}\le \|f\|_{H^s_*}\ \ \ \ \ \mbox{for all}\ \ \     
f\in H^s\ \ \ \mbox{and} \ \ \ s\in\natural.      
\end{equation}     
The second inequality is trivial, whereas the first     
inequality seems to be new and its proof is given in the appendix.      
     
{}From~\eqref{equivalence} it clearly follows that all results     
presented in this paper for the     
space $H^s$ equipped with $\il \cdot,\cdot\ir_{s}$ are practically     
the same as for the space $H^s$ equipped with $\il     
\cdot,\cdot\ir_{s,*}$.      
We choose to work with the inner product $\il \cdot,\cdot\ir_s$ since      
the analysis in this case is easier and more straightforward.     
\qed      
\end{rem}     
\vskip 1pc        
We want to solve the following problem:      
        
\begin{itemize}         
\item        
What is the complexity of the approximate computation         
of oscillatory integrals of the form        
\begin{equation*}    
I_k (f) = \int_0^1 f(x) \,\eu^{-2\pi\,i\,kx} \, {\rm d} x,        
\ \ \ i=\sqrt{-1},        
\end{equation*} 
where $k \in \Z$ and $f \in H^s$?        
Our emphasize is on large $|k|$.        
We improve the known upper bounds and also prove matching lower bounds.         
\end{itemize}         
        
\noindent        
We ask (and answer) the same question     
also for the periodic case, i.e., for the subspace of~$H^s$ given by        
\begin{equation}         
\wt H^s = \{ f\in H^s \mid         
f^{(\ell)}(0) = f^{(\ell)}(1) \,\hbox{ for } \ell=0,1, \dots , s-1 \}        
\end{equation}         
equipped with the same inner product as for the space $H^s$.      
Note that for $f,g\in\wt H^s$ this inner product simplifies to       
\[\begin{split}      
\il f,g \ir_{s} \;=\; \il f,1\ir_0 \widebar{\il g,1\ir_0}       
                        \,+\, \il f^{(s)}, g^{(s)} \ir_0.      
\end{split}\]      
      
The results are presented in the following order.        
We first consider the integration problem      
for periodic functions, i.e.,~for         
functions from $\wt H^s$, and then,         
using this knowledge, we         
analyze the integration problem for the space $H^s$.     
        
The results of this paper could be stated also for         
real-valued functions, where $I_k(f)$ can be written, for example, as        
\[        
I_k (f) = \int_0^1 f(x) \cos (2\pi kx) \, {\rm d} x         
\]         
for $k\in\Z$, losing only some negligible constants.        
We decided to work with complex-valued functions to ease the notation.        
     
\section{The periodic case}   \label{sec:periodic}        
As already indicated,  we first analyze         
oscillatory integration over $\wt H^s$.        
That is, we want to approximate the integral         
\begin{equation}            
I_k (f) := \int_0^1 f(x) \,\eu^{-2\pi\,i\,kx} \, {\rm d} x=        
\int_0^1f(x)\,\cos(2\pi\,k\,x)\,{\rm d}x\ - \ i        
\int_0^1f(x)\,\sin(2\pi\,k\,x)\,{\rm d}x,        
\end{equation}         
where $k \in \Z$ and $f \in \wt H^s$         
with $s \in \nat$. Although $k$ can be any integer, the emphasis of        
this paper is for large $|k|$. In this case        
the weight functions        
$\cos(2\pi kx)$ and $\sin(2\pi kx)$ highly oscillate and therefore        
the approximation of $I_k$         
is called an \emph{(highly) oscillatory integration problem}.         
        
We consider the worst case error on the unit ball of $\wt H^s$         
for algorithms that use function values or, more generally,        
function and derivatives (up to order $s-1$) values.        
Note that for $f \in H^s$, the values        
$f^{(j)}(x)$        
are well defined for all $j=0,1,\dots,s-1$ and $x\in[0,1]$.       
       
It is well known that adaption does not help,  
see Bakhvalov \cite{B71},        
and linear algorithms are optimal,  
see Smolyak \cite{S65}. These results can        
be also found in e.g., \cite{No88,NW08,TWW88}.      
This means that without loss of        
generality we may consider linear algorithms of the form      
\begin{equation}    \label{algo}        
A_n(f) = \sum_{j=1}^n        
a_{j} f^{(\ell_j)} (x_j)         
\end{equation}        
for some  $a_{j} \in\C$, $\ell_j\in[0,s-1]$ and $x_j\in[0,1]$.        
Observe that we allow the use of  
derivatives $f^{(\ell_j)}(x_j)$ as in~\cite{IN05}.      
An important special case are linear algorithms        
that use only        
function values, i.e.,        
\begin{equation}    \label{algofun}        
A_n(f) = \sum_{j=1}^n a_{j} f (x_j)  .       
\end{equation}        
 
To summarize the problem: We want to compute  
a single Fourier coefficient $I_k(f)$ for  
$f \in H^s$ by algorithms of the form \eqref{algo} or \eqref{algofun}.  
The problem is described by $k$ and $s$, while $n$  
describes the amount of resources  
of an algorithm $A_n$. The algorithm (the knots $x_j$ and the weights $a_j$)  
may depend in an arbitrary way on $k$ and $s$ and $n$.  
For the upper bounds we work (in the periodic case) with equidistant knots.  
We do not know whether these knots are optimal but our lower bounds  
(valid for all algorithms) show that they are at least almost optimal.  
 
Of course, for $s=1$ there is no difference between~\eqref{algo}     
and~\eqref{algofun}.     
We will see that for all~$s$ 
the complexity results are similar for both       
classes of algorithms~\eqref{algo} and~\eqref{algofun}.       
       
The worst case error of $A_n$ is defined as       
$$        
\tilde e(A_n)= 
\sup_{f\in\wt H^s, \,\Vert f \Vert_{H^s} \le 1} |I_k(f) - A_n(f)|,       
$$        
whereas the $n$th minimal worst case error is         
\begin{equation}  \label{eq:error}       
\tilde e(n, k, s) := \inf_{A_n} \tilde e(A_n).        
\end{equation}         
We use the tilde to indicate that we consider the        
periodic case, i.e., the class $\wt H^s$.       
The particular case $n=0$ corresponds to the zero algorithm $A_0=0$        
and gives the so-called \emph{initial error}         
\begin{equation} \label{eq:initial}       
\tilde e(0, k, s) :=  
\sup_{f\in\wt H^s,\, \Vert f \Vert_{H^s} \le 1} |I_k(f)|        
= \Vert I_k \Vert_{\wt H^s\to\C} .         
\end{equation}         
       
\begin{rem}         
We believe that this is a simple but already interesting model problem        
for approximating highly oscillatory integrals.         
Later we plan to study the multivariate case and tractability         
and we believe that, from a practical point of view,         
the integrals         
$$        
S_k(f) = \int_{\reals^d} f(x)\,         
\eu^{ikx_1} \exp (-\Vert x \Vert_2^2) \, {\rm d} x         
$$        
for smooth integrands $f:  \R^d \to \C$         
are more interesting.         
      Here $x_1$ is the first coordinate of a vector $x \in \R^d$.  
We start, however, with the integral~\eqref{eq:problem} since         
it seems to be the simplest interesting        
case of oscillatory integrals.        
\qed        
\end{rem}         
        
We begin 
with the computation of the initial errors         
$\tilde e(0,k,s) = \Vert I_k \Vert_{\wt H^s\to\C}$.        
Since $I_k$ is a continuous linear        
functional defined on the Hilbert space $\wt H^s$, Riesz's theorem         
implies that for each $k\in\Z$ and $s\in\N$,         
there exists a function $\tilde h_{k,s} \in \wt H^s$ such that         
$$        
I_k(f) =       
\il f, \tilde h_{k,s} \ir_s  \quad \hbox{for all} \quad f \in \wt H^s.        
$$        
The function $\tilde h_{k,s}$ is called the representer of $I_k$         
for the space $\wt H^s$. It is well known and easy to show that         
$\Vert \tilde h_{k,s} \Vert_{H^s} = \Vert I_k\Vert_{\wt H^s\to\C}$.         
        
To find $\tilde h_{k,s}$ consider the particular function $e_k(x) =         
\eu^{2\pi\,i\, k x}$. Clearly, $e_k\in\wt H^s$.       
Using integration by parts, we obtain for $k\not=0$     
\begin{equation}\label{eq:repr}        
\begin{split}       
\il f,e_k \ir_{s}      
&= \il f,1\ir_0 \widebar{\il e_k,1\ir_0}       
                        \,+\, \il f^{(s)}, e_k^{(s)} \ir_0      
 = \il f^{(s)}, e_k^{(s)} \ir_0      
 = (-1)^s \il f, e_k^{(2s)} \ir_0  \\      
&= (2 \pi k)^{2s} \il f, e_k\ir_0       
= (2 \pi k)^{2s}\, I_k(f).       
\end{split}\end{equation}        
For $k=0$, we have $\il f,e_k \ir_{s}=\il f,1\ir_0=I_0(f)$.      
Hence we obtain         
        
\begin{prop}    \label{prop1}         
Let $k\neq0$. The representer of $I_k$ for the space $\wt H^s$  
is        
$$       
\tilde h_{k,s} (x) = (2\pi k)^{-2s}\, \eu^{2\pi\,i\, k x}        
$$       
and the initial error is         
$$       
\tilde e(0, k, s) =        
\Vert \tilde h_{k,s} \Vert_{H^s}  = \frac{1}{\bigl(2\pi |k| \bigr)^{s}}.        
$$       
Additionally, for $k=0$    
we have $\tilde h_{0,s} (x)=1$ and $\tilde e(0, 0, s) = 1$.      
\end{prop}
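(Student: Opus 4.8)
The statement is essentially a corollary of the identity~\eqref{eq:repr}, so my plan is mainly to assemble the pieces and check the normalization. First I would note that the function $e_k(x)=\eu^{2\pi\,i\,kx}$ belongs to $\wt H^s$: it is $C^\infty$ on $[0,1]$ and each of its derivatives is $1$-periodic, so $e_k^{(\ell)}(0)=e_k^{(\ell)}(1)$ for $\ell=0,1,\dots,s-1$. For $k\neq0$, the chain of equalities in~\eqref{eq:repr} --- which uses $\il e_k,1\ir_0=\int_0^1\eu^{2\pi\,i\,kx}\dx=0$ to drop the first term of $\il\cdot,\cdot\ir_s$ and then integrates by parts $2s$ times using periodicity --- gives $\il f,e_k\ir_s=(2\pi k)^{2s}\,I_k(f)$ for every $f\in\wt H^s$. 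Dividing by $(2\pi k)^{2s}\neq0$ yields $I_k(f)=\il f,(2\pi k)^{-2s}e_k\ir_s$ for all $f\in\wt H^s$; since the Riesz representer is unique, this forces $\tilde h_{k,s}(x)=(2\pi k)^{-2s}\eu^{2\pi\,i\,kx}$.

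To obtain the initial error I would invoke the standard fact, recalled just before the proposition, that $\tilde e(0,k,s)=\Vert I_k\Vert_{\wt H^s\to\C}=\Vert\tilde h_{k,s}\Vert_{H^s}$. Taking $f=\tilde h_{k,s}$ in $I_k(f)=\il f,\tilde h_{k,s}\ir_s$ gives $\Vert\tilde h_{k,s}\Vert_{H^s}^2=I_k(\tilde h_{k,s})=(2\pi k)^{-2s}\,I_k(e_k)$, and $I_k(e_k)=\int_0^1\eu^{2\pi\,i\,kx}\,\eu^{-2\pi\,i\,kx}\dx=1$, so $\Vert\tilde h_{k,s}\Vert_{H^s}^2=(2\pi k)^{-2s}$. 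Because $2s$ is even, $k^{-2s}=|k|^{-2s}$, hence $(2\pi k)^{-2s}=\bigl((2\pi|k|)^{-s}\bigr)^2$, and therefore $\tilde e(0,k,s)=\Vert\tilde h_{k,s}\Vert_{H^s}=(2\pi|k|)^{-s}$.

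For $k=0$ the inner product on $\wt H^s$ simplifies to $\il f,g\ir_s=\il f,1\ir_0\,\overline{\il g,1\ir_0}+\il f^{(s)},g^{(s)}\ir_0$, so $\il f,1\ir_s=\il f,1\ir_0\cdot1+\il f^{(s)},0\ir_0=\int_0^1 f(x)\dx=I_0(f)$ for all $f\in\wt H^s$; thus $\tilde h_{0,s}\equiv1$, and $\Vert1\Vert_{H^s}^2=\il1,1\ir_s=\bigl|\il1,1\ir_0\bigr|^2+\il1^{(s)},1^{(s)}\ir_0=1$, giving $\tilde e(0,0,s)=1$. I do not foresee any real obstacle here: the only steps requiring a moment of care are citing uniqueness of the Riesz representer and the elementary observation that $k^{-2s}=|k|^{-2s}$; everything substantive is already contained in~\eqref{eq:repr}.
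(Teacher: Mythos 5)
Your proposal is correct and follows essentially the same route as the paper: the identity \eqref{eq:repr} (together with the $k=0$ case $\il f,1\ir_s=I_0(f)$) identifies the representer, and the initial error is then the norm of the representer, which you compute correctly since $(2\pi k)^{-2s}=(2\pi|k|)^{-2s}$. The paper simply states ``Hence we obtain'' after \eqref{eq:repr}; you have filled in the same routine details (membership of $e_k$ in $\wt H^s$, uniqueness of the Riesz representer, and the norm evaluation) without any deviation in method.
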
         
       
We now present a few linear algorithms whose worst case errors are  
of order         
$(n+|k|)^{-s}$. We will prove later that this is the best possible        
order.        
       
For $n\ge1$, we first define the linear algorithm         
$$        
A_n^{\rm QMC}(f)=\frac1n\, 
\sum_{j=1}^nf\left(j/n\right)\,\eu^{-2\pi\,i\,k\,j/n}       
\qquad  \mbox{for all} \qquad f\in\wt H^s.        
$$        
We use the superscript ${\rm QMC}$         
to stress that the algorithm uses equal weights $1/n$         
for the function  $f(\cdot)\, \exp (-2\pi\,i\, k\,\cdot)$.         
This means that this is a QMC (quasi Monte Carlo) algorithm.         
This is the standard way to compute Fourier coefficients for periodic  
functions, with computation of integrals for a range of $k$  
facilitated by the FFT.  
Observe, however, that our problem is to compute $I_k(f)$ for a single $k$.  
 
As we shall see, the worst case error of $A_n^{\rm QMC}$         
is small only if $n$ is sufficiently large with respect to $|k|$.        
Later, we will modify the algorithm $A_n^{\rm QMC}$ to have a good error        
bound for all~$n$. First we prove the following theorem.         
       
\goodbreak     
     
\begin{thm} \label{thm:qmc}        
\qquad        
\begin{enumerate}[(i)]        
\item The worst case error of $A_n^{\rm QMC}$, $n\ge1$, is         
$$        
\tilde e(A_n^{\rm QMC})=         
\left(\sum_{j=1}^\infty\left(\frac1{\max\{1,(2\pi(jn+k))^{2s}\}}+       
\frac1{\max\{1,(2\pi(jn-k))^{2s}\}}\right)       
\right)^{1/2}.          
$$          
\item       
For any $1\le n\le |k|$ we have        
$$        
\tilde e(A_n^{\rm QMC}) >  \, \tilde e(0, k, s).         
$$        
\item        
For any $n> |k|$ we have        
$$        
\tilde e(A_n^{\rm QMC})=         
\left(\sum_{j=1}^\infty\left(\frac1{(2\pi(jn+k))^{2s}}+       
\frac1{(2\pi(jn-k))^{2s}}\right)       
\right)^{1/2}\le\frac{2}{(2\pi)^s} \,       
\,\frac1{(n-|k|)^{s}}.          
$$        
\item        
Let $\a\in(0,1)$. Then for $n> [(1+\a)/(1-\a)]\,|k|$ we have        
$$        
\tilde e(A_n^{\rm QMC}) <  \frac{2}{(2\pi\a)^s} \,       
\frac1{(n+|k|)^{s}}.        
$$        
\qed       
\end{enumerate}        
\end{thm}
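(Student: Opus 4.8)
The plan is to pass to the Fourier side of the Hilbert space $\wt H^s$. For $f\in\wt H^s$ write $\hat f(m)=I_m(f)=\il f,e_m\ir_0$ with $e_m(x)=\eu^{2\pi\,i\,mx}$. The computation in \eqref{eq:repr} (together with its $k=0$ version) shows $\il f,e_m\ir_s=\lambda_m\,\il f,e_m\ir_0$, where $\lambda_m=(2\pi|m|)^{2s}$ for $m\neq0$ and $\lambda_0=1$; since $2\pi>1$ this is the same as $\lambda_m=\max\{1,(2\pi m)^{2s}\}$. Hence $\{\lambda_m^{-1/2}e_m\}_{m\in\Z}$ is an orthonormal basis of $\wt H^s$ and $\|f\|_{H^s}^2=\sum_{m\in\Z}\lambda_m\,|\hat f(m)|^2$. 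Because $s\ge1$, the bound $|\hat f(m)|\le\lambda_m^{-1/2}\|f\|_{H^s}$ forces $\sum_m|\hat f(m)|<\infty$, so the Fourier series of $f$ converges uniformly to the continuous function $f$ and point evaluations are legitimate. Substituting $f(j/n)=\sum_m\hat f(m)\,\eu^{2\pi\,i\,mj/n}$ into $A_n^{\rm QMC}$ and using that $\frac1n\sum_{j=1}^n\eu^{2\pi\,i\,(m-k)j/n}$ equals $1$ when $n$ divides $m-k$ and $0$ otherwise, one obtains the aliasing identity $A_n^{\rm QMC}(f)=\sum_{j\in\Z}\hat f(k+jn)$, and therefore
\[
I_k(f)-A_n^{\rm QMC}(f)\;=\;-\sum_{j\in\Z,\,j\neq0}\hat f(k+jn).
\]

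For part (i), the right-hand side is a bounded linear functional on $\wt H^s$: writing $\hat f(k+jn)=\lambda_{k+jn}^{-1/2}\bigl(\lambda_{k+jn}^{1/2}\hat f(k+jn)\bigr)$ and applying the Cauchy--Schwarz inequality in $\ell_2$ bounds its norm by $\bigl(\sum_{j\neq0}\lambda_{k+jn}^{-1}\bigr)^{1/2}$, which is finite since $\lambda_{k+jn}^{-1}$ decays like $j^{-2s}$. Equality holds because the Riesz representer of this functional is the explicit element $-\sum_{j\neq0}\lambda_{k+jn}^{-1}e_{k+jn}\in\wt H^s$, whose squared $H^s$-norm is exactly $\sum_{j\neq0}\lambda_{k+jn}^{-1}$. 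Splitting the sum into $j>0$ and $j<0$ and recalling $\lambda_m=\max\{1,(2\pi m)^{2s}\}$ (with $(2\pi m)^{2s}=(2\pi|m|)^{2s}$) yields the stated formula; the clause $\max\{1,\cdot\}$ matters only when $k+jn=0$ for some $j\neq0$, that is, when $n\mid k$, in which case the relevant weight is $\lambda_0=1$.

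Parts (ii)--(iv) are then estimates of this series. For (ii), let $1\le n\le|k|$; by the symmetry $k\mapsto-k$ of the formula we may assume $k>0$. If $n\mid k$, the index with $k+jn=0$ contributes $1/\lambda_0=1>(2\pi|k|)^{-2s}=\tilde e(0,k,s)^2$, and all other terms are nonnegative, so the claim follows. Otherwise write $k=qn+r$ with $q=\lfloor k/n\rfloor\ge1$ and $1\le r\le n-1\le|k|-1$; then $j=-q\neq0$ gives $|k+jn|=r$, whose term is $(2\pi r)^{-2s}>(2\pi|k|)^{-2s}$, again forcing $\tilde e(A_n^{\rm QMC})^2>\tilde e(0,k,s)^2$. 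For (iii), when $n>|k|$ every $k+jn$ with $j\neq0$ is a nonzero integer and $|k+jn|\ge n-|k|>0$, so no $\max$ is active; using $|jn\pm k|\ge j\,(n-|k|)$ for $j\ge1$,
\[
\tilde e(A_n^{\rm QMC})^2\;\le\;\frac{2}{\bigl(2\pi(n-|k|)\bigr)^{2s}}\sum_{j=1}^\infty\frac1{j^{2s}}\;<\;\frac{4}{\bigl(2\pi(n-|k|)\bigr)^{2s}},
\]
since $\sum_{j\ge1}j^{-2s}\le\pi^2/6<2$; taking square roots gives (iii). Finally, for (iv) the hypothesis $n>\frac{1+\a}{1-\a}\,|k|$ rearranges to $n-|k|>\a\,(n+|k|)>0$, so in particular $n>|k|$ and (iii) applies; substituting $n-|k|>\a(n+|k|)$ into the bound of (iii) gives $\tilde e(A_n^{\rm QMC})<\frac{2}{(2\pi\a)^s(n+|k|)^s}$.

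The main obstacle is the groundwork for part (i): one must justify that the Fourier series of $f\in\wt H^s$ converges uniformly---so that the termwise evaluation at the nodes $j/n$ and the interchange of the two summations are valid, which is exactly where $s\ge1$ enters---and one must carefully track the possibly degenerate index $k+jn=0$, which is responsible for the $\max\{1,\cdot\}$ in the error formula and for the strict inequality in (ii). Once the aliasing identity and the norm computation in (i) are in place, parts (ii)--(iv) reduce to the elementary series estimates sketched above.
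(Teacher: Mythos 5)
Your proposal is correct and follows essentially the same route as the paper: the aliasing identity $A_n^{\rm QMC}(f)=\sum_{j\in\Z}\hat f(k+jn)$, the exact error in (i) via Cauchy--Schwarz together with the Riesz representer (the paper instead exhibits the extremal $f$, which is equivalent), and elementary estimates of the resulting series for (ii)--(iv). The only differences are cosmetic---in (iii) you use $|jn\pm k|\ge j\,(n-|k|)$ and $\zeta(2s)<2$ where the paper uses an integral comparison, and in (ii) you isolate the aliasing term closest to frequency zero rather than the $j=1$ term of size $|k|-n$---and they yield the same bounds.
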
        
      
\begin{proof}       
For $h\in\Z$, let  $e_h(x)=\eu^{2\pi\,i\, hx}$ for $x\in[0,1]$.        
Since $f$ is a periodic function from~$\wt H^s$ we can write        
$$        
f(x)=\sum_{h\in\Z}\hat f_h\,e_h(x),        
$$        
with the Fourier coefficients        
$\hat f_h=\int_0^1f(x)\,\eu^{-2\pi\,i\,hx}\,{\rm d}x$.      
Since $f$ is smooth the last series is also pointwise convergent.     
Then         
$$        
A_n^{\rm QMC}(f)=\frac1n\,\sum_{h\in\Z}\hat f_h\,\sum_{j=1}^n\,        
\left[\eu^{2\pi\,i\,(h-k)/n}\right]^j.         
$$        
Note that the sum with respect to $j$ is zero if $h-k\not=0\mod n$,     
and is equal to $n$ if $h-k=0\mod n$. Therefore we can restrict $h$ to         
$h=k+jn$ with $j\in\Z$, and         
$$        
A_n^{\rm QMC}(f)=\sum_{j\in \Z}\hat f_{k+jn}.          
$$        
          We have $\hat f_k=I_k(f)$ which yields        
$$        
I_k(f)-A_n^{\rm QMC}(f)=-\sum_{j\in\Z\setminus 0}\hat f_{k+jn}.        
$$        
Let $a_h=\max\{1, (2\pi h)^{2s}\}$. Clearly $a_h=a_{-h}$.        
Since $e_h$'s are orthogonal in $\wt H^s$      
and $\|e_h\|^2_{H^s}=a_h$, for       
$f\in \wt H^s$ we have        
$$        
\|f\|^2_{H^s}=\sum_{h\in \Z}|\hat f_h|^2 a_h<\infty.        
$$         
Hence,        
$$        
|I_k(f)-A_n^{\rm QMC}(f)|        
=\left|\sum_{j\in\Z\setminus 0}\hat        
 f_{k+jn}\,a_{k+jn}^{1/2}\,a_{k+jn}^{-1/2}\right|\le        
\left(\sum_{j\in\Z\setminus0}|\hat f_{k+jn}|^2 a_{k+jn}\right)^{1/2}        
\left(\sum_{j\in\Z\setminus0} a_{k+jn}^{-1}\right)^{1/2}        
$$        
and        
$$        
|I_k(f)-A_n^{\rm QMC}(f)|\le \|f\|_{H^s}\,        
\left(\sum_{j=1}^\infty\left(\frac1{\max\{1,(2\pi(jn+k))^{2s}\}}+       
\frac1{\max\{1,(2\pi(jn-k))^{2s}\}}\right)       
\right)^{1/2}.         
$$        
The last inequality becomes an equality if we take         
$$        
f=c\,\sum_{j\in\Z\setminus0}\hat f_{k+jn}e_{k+jn}\ \ \ \         
\mbox{with}\ \ \ \ c\not=0, \ \ \mbox{and}\ \ \hat f_{k+jn}=a_{k+jn}^{-1}.        
$$         
We can choose $c$ such that $\|f\|_{H^s}=1$. This yields the formula for        
$\tilde e(A_n^{\rm QMC})$ and proves (i).        
\vskip 1pc        
Using (i), we obtain for $n\in[1,|k|]$ that        
\[       
\tilde e(A_n^{\rm QMC}) >        
\left(\frac1{\max\{1,(2\pi(|k|-n))^{2s}\}}\right)^{1/2}       
> \left(\frac1{\max\{1,(2\pi k)^{2s}\}}\right)^{1/2}        
= \tilde e(0,k,s).       
\]       
This proves (ii).       
\vskip 1pc       
We now estimate $\tilde e(A_n^{\rm QMC})$ for        
$n>|k|$. For such an        
integer $n$ and all $j\in\N$ we have       
\[\begin{split}        
\frac1{\max\{1,(2\pi(jn+k))^{2s}\}}+       
\frac1{\max\{1,(2\pi(jn-k))^{2s}\}}     
&\le\;\frac2{(2\pi(jn-|k|))^{2s}}.        
\end{split}\]        
This yields        
$$        
\tilde e^2(A_n^{\rm QMC})\,\le\,         
\frac2{(2\pi)^{2s}}\,\sum_{j=1}^\infty\frac1{(jn-|k|)^{2s}}.        
$$        
We have        
\begin{eqnarray*}        
\sum_{j=1}^\infty\frac1{(jn-|k|)^{2s}}&=&        
\frac1{(n-|k|)^{2s}}+        
\sum_{j=2}^\infty\frac1{(jn-|k|)^{2s}}\\        
&\le&        
\frac1{(n-|k|)^{2s}}+        
\int_1^\infty\frac{{\rm d}x}{(n x -|k|)^{2s}}\\        
&=&        
\frac1{(n-|k|)^{2s}}-\frac1{(2s-1)\,n}\left(nx-|k|\right)^{-(2s-1)}        
\bigg|_1^\infty\\        
&=&        
\frac1{(n-|k|)^{2s}}+\frac1{(2s-1)\,n\,(n-|k|)^{2s-1}}\\        
&=&        
\left(1+\frac{n-|k|}{(2s-1)\,n}\right)\,\frac1{(n-|k|)^{2s}}       
\,\le\,       
\frac{2s}{2s-1}\; \frac1{(n-|k|)^{2s}}\\       
&\le& \frac2{(n-|k|)^{2s}}.        
\end{eqnarray*}        
This completes the estimate of $\tilde e(A^{\rm QMC}_n)$ for $n>|k|$,       
and proves (iii).       
\vskip 1pc        
If $n> [(1+\a)/(1-\a)]\,|k|$ we have $n-|k| > \a(n+|k|)$       
and $(n-|k|)^{-s} <  \alpha^{-s}(n+|k|)^{-s}$.        
Then (iii)  easily yields (iv) and completes the proof.       
\end{proof}       
       
We comment on Theorem~\ref{thm:qmc}.        
Note that for $k=0$, the point (ii) cannot happen        
and the assumptions of (iii) and (iv) always hold.        
We now discuss this theorem for $k\not=0$. We start with (iv).        
Obviously, if $\alpha$ is close to zero then the condition         
on $n$ is relaxed. However, the upper bound on         
$\tilde e(A_n^{\rm QMC})$ is weaker 
since the factor $(2\pi\alpha)^{-s}$         
goes to infinity. On the other hand, if $\alpha$ goes to one then the        
condition on $n$ is more severe but the upper bound        
(in terms of $(n+|k|)^{-s}$) on        
$\tilde e(A_n^{\rm QMC})$ is better. This means that there is a tradeoff        
between the condition on $n$ and the quality of the upper bound on        
$\tilde e(A_n^{\rm QMC})$.         
This problem disappears if $n$ goes to infinity. Then         
we can take $\alpha$ close to one. In fact,         
the formula for $\tilde e(A_n^{\rm QMC})$ for $n$      
tending to infinity yields        
\begin{equation}\label{asympt}       
\lim_{n\to\infty}\tilde e(A_n^{\rm        
QMC})\,n^s=\frac{(2\zeta(2s))^{1/2}}{(2\pi)^s},        
\end{equation}       
where $\zeta$ is the Riemann zeta function,        
$\zeta(x)=\sum_{j=1}^\infty j^{-x}$ for $x>1$.         
       
\begin{rem}       
It is interesting that the right hand side of~\eqref{asympt}       
appears for other problems.       
First of all, it is        
the norm of the embedding of $\wt H^s\cap\{f\in \wt H^s| \       
\int_0^1f(x)\,{\rm d}x=0\}$,        
equipped with the norm        
$\|f^{(s)}\|_{L_2}$, into $C([0,1])$, see \cite[Theorem 1.2]{KYWNT06}.        
This was proven by calculating the diagonal values of the corresponding        
reproducing kernel.       
Moreover, the right hand side of~\eqref{asympt}       
equals $\frac{1}{s!} \|B_s\|_{L_2}$, where        
$B_s$ is the Bernoulli polynomial of degree $s$,       
see \cite[Lemma~2.16]{KYWNT06}.       
\qed       
\end{rem}       
       
We now comment on Theorem~\ref{thm:qmc} when $n\in[1,|k|]$.       
In this case we know that the algorithm $A_n^{\rm QMC}$ is even worse        
than the zero algorithm. For instance, take $n=|k|$. Then it is easy       
to conclude from (i) that       
\begin{equation}  \label{bound33}        
\tilde e(A_n^{\rm QMC})^2 > \, 1 + \frac{1}{(4\pi k )^{2s}}.       
\end{equation}        
Note that \eqref{bound33} is almost worst possible       
since every quadrature rule $A_n$        
with positive weights which sum up to one satisfies        
\begin{eqnarray*}       
\tilde e(A_n)        
&\le& \sup_{f\in\wt H^s, \,\Vert f \Vert_{H^s} \le 1} \left(|I_k(f)| +       
  |A_n(f)|\right)\le        
 \Vert I_k \Vert_{\wt H^s\to\C} +\sup_{f\in\wt H^s, \,\Vert f   
   \Vert_{H^s}   
   \le 1}\,\sup_{x\in[0,1]}|f(x)|\\       
&= & \frac{1}{(2\pi |k| )^{s}}+       
\| {\rm Id}\|.        
\end{eqnarray*}       
Here, ${\rm Id}:\wt H^s\to C([0,1])$ is the embedding operator, i.e.,     
${\rm Id}f=f$ for all $f\in H^s$. We can estimate its norm as follows.     
We know that $\wt H^s$ is a reproducing kernel Hilbert space with the      
kernel\footnote{The formula of the reproducing kernel of $H^s$      
given as      
(10.2.4) on page 130 in \cite{W90} and as Example 21 on page 320 in      
\cite{BT04} has a typo. The term $B^*_s(x)B^*_s(t)$ should be replaced     
by $\sum_{j=1}^sB^*_j(x)B^*_j(t)$, as is correctly stated in the original      
paper~\cite{CW79}, where this result is proved.}      
\[        
\wt K_s(x,t) \;=\; 1+(-1)^{s-1}\,B^*_{2s}(\{x-t\})     
\;=\; 1+\sum_{h\in\Z\setminus0}\,         
\frac{\eu^{2\pi\,i\, h(x-t)}}{(2\pi h)^{2s}}     
\]         
where $B^*_k=B_k/k!$ is the $k$th normalized Bernoulli polynomial,      
see \eqref{eq:bernoulli}, and      
$\{x-t\}$ is the fractional part of $x-t$.      
This implies for $f$ with $\|f\|_{H^s}\le1$ that     
$$     
f^2(x)=\il f,\wt K_s(\cdot,x)\ir_s^2\le \|f\|_{H^s}^2\,\wt K_s(x,x)\le     
1+\frac2{(2\pi)^{2s}}\,\sum_{j=1}^\infty\frac1{j^{2s}}.     
$$     
Hence, $\|{\rm Id}\|^2\le 1+2\zeta(2s)/(2\pi)^{2s}$ and      
$$     
\tilde e(A_n)\le 1  
+ \frac{1}{(2\pi |k| )^{s}}+  \frac{(2\zeta(2s))^{1/2}}{(2\pi)^{s}}     
$$     
which for large $s$ is close to one as the right hand-side of~\eqref{bound33}. 
 
\vskip 1pc       
We now show how to modify the algorithm $A_n^{\rm QMC}$ such that its        
worst case error is smaller than the initial error $\tilde e(0,k,s)$        
with no condition on $n$. It turns out that the weight $n^{-1}$       
used by the algorithm $A_n^{\rm QMC}$ is too large.        
\begin{thm}\label{optya}       
For $a\in\R$, consider the algorithm of the form       
$$       
A_{n,a}(f)=\frac{a}{n}\,\sum_{j=1}^nf(j/n)\,\exp^{-2\pi\,i\,k\,j/n}       
\ \ \ \ \ \mbox{for all}\ \ \ \ \ f\in \tilde h^s.       
$$       
The worst case error of $A_{n,a}$ is minimized with respect to $a$      
for       
$$       
a=a^*_n=\frac{[\tilde e(0,k,s)]^2}{[\tilde e(0,k,s)]^2+[\tilde e(A_n^{\rm       
    QMC})]^2},       
$$       
and        
$$       
\tilde e(A_{n,a^*_n})= \frac{\tilde e(0,k,s)\ \tilde e(A_n^{\rm QMC})}       
{\sqrt{[\tilde e(0,k,s)]^2+[\tilde e(A_n^{\rm QMC})]^2}}.       
$$       
Clearly,        
$$       
a^*_n<1\ \ \ \ \ \mbox{and}\ \ \ \ \        
\tilde e(A_{n,a^*_n})<\min\{\tilde e(0,k,s),       
\tilde e(A_n^{\rm QMC})\}.       
$$       
\qed      
\end{thm}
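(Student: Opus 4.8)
The plan is to observe that $A_{n,a}$ is merely a rescaling of the QMC rule, namely $A_{n,a}=a\,A_n^{\rm QMC}$, and then to compute the worst case error of $A_{n,a}$ exactly by expanding everything in the trigonometric system $\{e_h\}_{h\in\Z}$, just as in the proof of Theorem~\ref{thm:qmc}. From that proof we already know that for $f\in\wt H^s$ with Fourier coefficients $\hat f_h$ we have $I_k(f)=\hat f_k$ and $A_n^{\rm QMC}(f)=\sum_{j\in\Z}\hat f_{k+jn}$, whence
$$
I_k(f)-A_{n,a}(f)=(1-a)\,\hat f_k-a\sum_{j\in\Z\setminus0}\hat f_{k+jn}.
$$
Thus the error functional $f\mapsto I_k(f)-A_{n,a}(f)$ on $\wt H^s$ has ``Fourier symbol'' $1-a$ at frequency $k$, symbol $-a$ at the frequencies $k+jn$ for $j\neq0$, and $0$ elsewhere.

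Next I would use that the $e_h$ are orthogonal in $\wt H^s$ with $\|e_h\|_{H^s}^2=a_h:=\max\{1,(2\pi h)^{2s}\}$, so that $\|f\|_{H^s}^2=\sum_{h\in\Z}|\hat f_h|^2a_h$. Writing $\hat f_h=c_h\,a_h^{-1/2}$ and applying the Cauchy--Schwarz inequality --- which is attained by exactly the same type of extremal $f$ as in the proof of Theorem~\ref{thm:qmc}(i), with Fourier coefficients supported on $\{k\}\cup\{k+jn:j\neq0\}$ --- I get
$$
[\tilde e(A_{n,a})]^2=(1-a)^2\,a_k^{-1}+a^2\sum_{j\in\Z\setminus0}a_{k+jn}^{-1}.
$$
Now $a_{-h}=a_h$, and for every $k$ one has $a_k^{-1}=[\tilde e(0,k,s)]^2$ (equal to $(2\pi|k|)^{-2s}$ for $k\neq0$ by Proposition~\ref{prop1}, and to $1$ for $k=0$), while $\sum_{j\in\Z\setminus0}a_{k+jn}^{-1}$ is precisely the quantity $[\tilde e(A_n^{\rm QMC})]^2$ computed in Theorem~\ref{thm:qmc}(i). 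Hence
$$
[\tilde e(A_{n,a})]^2=(1-a)^2\,[\tilde e(0,k,s)]^2+a^2\,[\tilde e(A_n^{\rm QMC})]^2.
$$

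It then remains to minimize this quadratic in $a\in\R$. Writing $E_0:=\tilde e(0,k,s)>0$ and $E_1:=\tilde e(A_n^{\rm QMC})>0$ and completing the square,
$$
(1-a)^2E_0^2+a^2E_1^2=(E_0^2+E_1^2)\Bigl(a-\tfrac{E_0^2}{E_0^2+E_1^2}\Bigr)^2+\frac{E_0^2E_1^2}{E_0^2+E_1^2},
$$
so the minimizer is $a=a_n^*=E_0^2/(E_0^2+E_1^2)$ and the minimum of $[\tilde e(A_{n,a})]^2$ equals $E_0^2E_1^2/(E_0^2+E_1^2)$; taking square roots gives the stated formulas. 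Finally $E_1>0$ forces $a_n^*<1$, and $\sqrt{E_0^2+E_1^2}>\max\{E_0,E_1\}$ yields $\tilde e(A_{n,a_n^*})=E_0E_1/\sqrt{E_0^2+E_1^2}<E_0E_1/\max\{E_0,E_1\}=\min\{E_0,E_1\}$. I do not expect a genuine obstacle here: the one point that deserves care is the vanishing of the cross term in $|I_k(f)-A_{n,a}(f)|^2$, which is exactly what orthogonality of the trigonometric system gives and which collapses the whole problem to a one-line calculus exercise; as a side remark, allowing $a\in\C$ would not help, since $|1-a|^2E_0^2+|a|^2E_1^2$ is already minimized on the real axis.
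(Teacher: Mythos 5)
Your proposal is correct and follows essentially the same route as the paper: expand in the orthogonal system $\{e_h\}$ to get $I_k(f)-A_{n,a}(f)=(1-a)\hat f_k-a\sum_{j\neq0}\hat f_{k+jn}$, identify the resulting error as $\bigl((1-a)^2[\tilde e(0,k,s)]^2+a^2[\tilde e(A_n^{\rm QMC})]^2\bigr)^{1/2}$, and minimize the quadratic in $a$. The extra details you supply (sharpness of Cauchy--Schwarz, completing the square, and the remark that complex $a$ would not help) are consistent with, and merely elaborate on, the paper's argument.
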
       
\begin{proof}       
Repeating the analysis of the first part of the proof of       
Theorem~\ref{thm:qmc}, we obtain        
$$       
I_k(f)-A_{n,a}(f)=(1-a)\hat f_k -a\sum_{j\in\Z\setminus 0}\hat f_{k+jn}.       
$$       
Similarly as before we use $a_h=\max\{1, (2\pi h)^{2s}\}$ and       
conclude that        
$$       
\tilde e(A_{n,a})=\left(\frac{(1-a)^2}{a_k}+a^2\,\       
\sum_{j\in\Z\setminus 0}\frac1{a_{k+jn}}\right)^{1/2}=       
\left((1-a)^2[\tilde e(0,k,s)]^2+a^2[\tilde e(A_n^{\rm QMC})]^2\right)^{1/2}.       
$$       
Clearly, the last expression is minimized with respect to $a$ for       
$a=a^*_n$ from which we obtain the form of $\tilde e(A_{n,a^*_n})$.       
This completes the proof.       
\end{proof}       
       
We discuss $a^*_n$ which decreases the weight $n^{-1}$ in the       
algorithm $A_{n,a^*_n}$. For $n\in[1,|k|]$, the point (ii) of        
Theorem~\ref{thm:qmc} yields that $a^*_n<1/2$. For $n=|k|\ge1$ we know       
from~\eqref{bound33} that $\tilde e(A_n^{\rm QMC})>1$, and therefore       
$a^*_n\le [\tilde e(0,k,s)]^2= (2\pi|k|)^{-2s}$ which is       
polynomially small in $|k|$ and exponentially small in $s$.       
On the other hand, if $k$ is fixed      
and $n$ goes to infinity then $a^*_n$ goes to one        
and the algorithm $A_{n,a^*_n}$ becomes the same as the algorithm        
$A_n^{\rm QMC}$.        
         
The algorithm $A_{n,a^*_n}$ has a (small) computational drawback       
since it requires the exact value of $a^*_n$ which is given by the       
infinite series describing the worst case error of $\tilde e(A_n^{\rm       
  QMC})$. Of course, it can be precomputed to an arbitrary accuracy.       
       
There is another simple idea how to modify the algorithm       
$A_n^{\rm QMC}$ without computing $a^*_n$.        
Namely, for small $n$ we  use the zero algorithm whereas for large $n$         
we use the algorithm $A_n^{\rm QMC}$. More precisely,        
for $n=0,1,\dots$, we define        
the algorithm        
\begin{equation} \label{eq:A_per}       
A^*_n(f)=\begin{cases}        
\ \ \ 0& \ \ \mbox{if} \ \ n=0\ \ \mbox{or}\ \ n<2|k|,\\        
\ \ \ A_n^{\rm QMC}(f)&\ \ \mbox{if}  \ \ n\ge\max(1,2|k|).        
\end{cases}        
\end{equation}        
The algorithm $A_n^*$ uses no information on $f$ if $n=0$ or $n<2|k|$,       
and $n$ function values otherwise.       
Based on Theorem~\ref{thm:qmc} and the discussion after its proof         
it is easy to show        
       
\begin{cor}\label{coranstar}        
We have        
$$        
\tilde e(A^*_n)\,        
\begin{cases}        
\ \ =1&\ \ \ \mbox{for}\ \ \ k=0\ \  \mbox{and}\ \ n=0,\\        
\ \ =\frac1{(2\pi|k|)^s}&\ \ \ \mbox{for}\ \ \ k\not=0\ \ \mbox{and}\ \        
n\in[0,2|k|),\\        
\ \ \le \frac{2}{(2\pi)^s}\frac1{(n-|k|)^s}&\ \ \ \mbox{for}\ \ \         
n\ge\max(1,2|k|).         
\end{cases}        
$$        
Furthermore,       
\begin{equation}\label{estanstar}        
\tilde e(A^*_n) \;\le\;        
\left(\frac3{2\pi}\right)^s\,\frac{2}{(n+|k|)^s}\ \ \ \         
\mbox{for all}\ \ \ \ n\ge1.       
\end{equation}       
\qed       
\end{cor}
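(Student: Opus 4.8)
The plan is to dispose of the three displayed cases by unwinding the definition~\eqref{eq:A_per}, and then to derive~\eqref{estanstar} by a short case analysis on~$n$.

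For the first two lines of the displayed formula the algorithm $A_n^*$ is simply the zero algorithm: this happens when $n=0$, and also when $0\le n<2|k|$ (which forces $k\neq0$). Hence $\tilde e(A_n^*)=\tilde e(0,k,s)$, and Proposition~\ref{prop1} supplies the value~$1$ when $k=0$ and $n=0$, and the value $(2\pi|k|)^{-s}$ when $k\neq0$ and $n\in[0,2|k|)$. For the third line, the hypothesis $n\ge\max(1,2|k|)$ gives both $A_n^*=A_n^{\rm QMC}$ and $n>|k|$ (either $|k|=0<1\le n$, or $|k|\ge1$ and $n\ge2|k|>|k|$); so part~(iii) of Theorem~\ref{thm:qmc} applies directly and yields $\tilde e(A_n^*)\le \frac{2}{(2\pi)^s}\,(n-|k|)^{-s}$.

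It then remains to prove the uniform bound~\eqref{estanstar} for all $n\ge1$, which I would do by splitting according to whether $n<2|k|$ or $n\ge2|k|$. If $n<2|k|$, then $k\neq0$ and $\tilde e(A_n^*)=(2\pi|k|)^{-s}$ by the first part, while $n+|k|<3|k|$ gives
\[
\tilde e(A_n^*)\;=\;\frac1{(2\pi|k|)^s}\;=\;\left(\frac3{2\pi}\right)^s\frac1{(3|k|)^s}\;<\;\left(\frac3{2\pi}\right)^s\frac2{(n+|k|)^s}.
\]
If instead $n\ge2|k|$, then $2(n-2|k|)\ge0$ rearranges to $n+|k|\le 3(n-|k|)$, so $(n-|k|)^{-s}\le 3^s(n+|k|)^{-s}$; plugging this into the third-line estimate gives $\tilde e(A_n^*)\le \frac{2}{(2\pi)^s}\,3^s(n+|k|)^{-s}=\left(\frac3{2\pi}\right)^s\frac2{(n+|k|)^s}$, which is~\eqref{estanstar}.

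Everything above is elementary bookkeeping layered on Theorem~\ref{thm:qmc} and Proposition~\ref{prop1}, so I do not expect a real obstacle; the only points needing a little care are keeping the cases $k=0$ and $k\neq0$ and the boundary value $n=2|k|$ straight, and verifying that the equivalence ``$n+|k|\le3(n-|k|)$ if and only if $n\ge2|k|$'' is exactly what turns the $(n-|k|)^{-s}$ bound into the $(n+|k|)^{-s}$ bound with the stated constant $(3/(2\pi))^s$.
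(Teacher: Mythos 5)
Your proposal is correct and follows essentially the same route as the paper: the zero-algorithm cases come from Proposition~\ref{prop1}, the case $n\ge\max(1,2|k|)$ from Theorem~\ref{thm:qmc}(iii), and the uniform bound \eqref{estanstar} from splitting at $n=2|k|$ and comparing $n-|k|$ with $n+|k|$. The only (harmless) difference is that where the paper invokes Theorem~\ref{thm:qmc}(iv) with $\alpha=1/3$, you re-derive that case directly from (iii) via $n+|k|\le 3(n-|k|)$, which in fact treats the boundary value $n=2|k|$ slightly more cleanly, since (iv) formally requires the strict inequality $n>2|k|$.
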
         
       
\begin{proof}       
Assume first that $k=0$.        
Then for $n=0$ we have $A^*_0=0$ and $\tilde e(A^*_0)=1$.        
For $n\ge1$ we have $A^*_n=A_n^{\rm QMC}$ and we use       
Theorem~\ref{thm:qmc}(iii) to get the third estimate on $\tilde e(A^*_n)$.     
       
Assume now that $k\not=0$. For $n\in[0,2|k|)$  the error of $A^*_0=0$  
is the       
initial error which is $(2\pi|k|)^{-s}$. For $n\ge2|k|$       
we have $A^*_n=A_n^{\rm QMC}$ and the estimate on       
$\tilde e(A^*_n)$ follows from Theorem~\ref{thm:qmc}(iii).        
       
We now prove the estimate~\eqref{estanstar}. Again assume first that       
$k=0$.           
Consider first the case $n\ge \max(1,2|k|)$. We can now apply       
Theorem~\ref{thm:qmc}(iv) with $\alpha=1/3$ and then       
$$       
\tilde e(A_n^*)=\tilde e  (A_n^{\rm       
  QMC})\le\left(\frac3{2\pi}\right)^s\,\frac2{(n+|k|)^s},       
$$       
as claimed. It remains to consider the case $n\in[1,2|k|)$ for       
$k\not=0$. Then $|k|>(n+|k|)/3$ and         
$$        
\tilde e(A^*_n)
=\frac1{(2\pi|k|)^s}\le \left(\frac3{2\pi(n+|k|)}\right)^s=        
\left(\frac3{2\pi}\right)^s\,\frac1{(n+|k|)^s},        
$$        
as claimed. This completes the proof.        
\end{proof}       
     
\begin{rem}                      
Another possible modification of the 
algorithm $A_n^{\rm QMC}$ for small $n$ is  
the ``Filon-type'' approach, see e.g.~\cite{DGS11,HO09,IN05,Me10}. 
For such an algorithm one assumes to have  
a given set of functions $\{b_1,\dots,b_N\}$,  
e.g.~$b_\ell$ could be polynomials, and that we know  
the values $I_k(b_\ell)$ for all $\ell=1,\dots,N$.  
Using these functions and the function values $y_j=f(x_j)$, $j=1,\dots,n$,  
we compute an approximation of the input $f$  
of the form $F_n(x)=\sum_{\ell=1}^N c_\ell(y_1,\dots,y_n)\,b_\ell(x)$.  
The ``Filon-type'' algorithm $A_n^{**}$ for $I_k$ is then given by  
$A_n^{**}(f)=\sum_{\ell=1}^N c_\ell(y_1,\dots,y_n)\,I_k(b_\ell)$.  
In Theorem \ref{thm:int1}  we prove a lower bound valid for all 
algorithms that use function and derivatives values. This lower bound 
shows that our algorithm $A_n^*$ is almost optimal but does not 
exclude the possibility that a suitably chosen Filon-type algorithm  
is even slightly better than $A_n^*$, i.e., $\wt e(A^{**}_n)<\wt 
e(A^*_n)$ although $\wt e(A^{**}_n)$ must be also of order  
$(n+|k|)^{-s}$. 
\end{rem} 

We stress that all algorithms considered so far use only function       
values although we allow also computation of derivatives up to order       
$s-1$. Furthermore, they use function values at equally spaced       
points and use the same weights $n^{-1}$ or $a^*_nn^{-1}$ for large       
$n$. Although algorithms that minimize the worst case error are       
probably not of this form, we now prove a lower bound on        
the order of convergence of an arbitrary algorithm, and show       
that this order is       
$(n+|k|)^s$. Hence the algorithm $A^*_n$ enjoys the best possible       
order of convergence. Additionally, the algorithm $A^*_n$ is easy to       
implement.          
       
\begin{thm} \label{thm:int1}        
Consider the integration problem $I_k$ defined over the        
space $\wt H^s$ of periodic functions with $s \in \nat$.        
Let $\tilde e(n,k,s)$ be the $n$th minimal worst case error       
of all algorithms that use at most $n$ function or derivatives (up to       
order $s-1$) values, see~\eqref{eq:error}.       
There is a number $c_s >0$ such that         
$$        
\frac{c_s}{(n+|k|)^s} \;\le\; \tilde e(n,k,s) \;\le\;        
\left(\frac3{2\pi}\right)^s\,\frac{2}{(n+|k|)^s}        
$$        
for all $k\in\Z$ and $n\in\N$.         
\qed      
\end{thm}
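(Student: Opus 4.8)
The upper bound is already in hand: it is precisely the estimate~\eqref{estanstar} of Corollary~\ref{coranstar}, attained by the explicit algorithm $A_n^*$. So the whole content of the theorem is the matching lower bound $\tilde e(n,k,s)\ge c_s(n+|k|)^{-s}$, and my plan is the classical ``fooling function'' argument tailored to the oscillatory kernel. Since it suffices to consider algorithms of the form~\eqref{algo} --- with knots $x_1,\dots,x_n\in[0,1]$ and derivative orders $\ell_1,\dots,\ell_n\in[0,s-1]$ --- I would construct a nonzero $f\in\wt H^s$ that vanishes together with all its derivatives up to order $s-1$ at every knot $x_j$. Then $A_n(f)=0$, so $\tilde e(A_n)\ge|I_k(f)|/\|f\|_{H^s}$ (test $A_n$ against $f/\|f\|_{H^s}$), and the whole problem collapses to showing that this ratio is of order $(n+|k|)^{-s}$. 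If $n=0$ and $k\ne0$, Proposition~\ref{prop1} already gives $\tilde e(0,k,s)=(2\pi|k|)^{-s}\ge(2\pi)^{-s}(n+|k|)^{-s}$, so assume $n\ge1$ below.

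For the construction I would fix once and for all a bump $\psi\in C^\infty(\R)$ with $\psi\ge0$, $\psi\not\equiv0$ and $\psi(x)=0$ for $x\le0$ and $x\ge1$, and put $M_s=\max_{0\le\ell\le s}\|\psi^{(\ell)}\|_{L_2}$. Viewing all functions on the circle $\R/\Z$ (so periodicity is automatic), I would partition it into $4n$ consecutive arcs of length $\delta=1/(4n)$; since there are only $n$ knots, at least $N\ge3n$ of these arcs are free of knots, and, writing $a_1,\dots,a_N$ for their left endpoints, $N\delta\in[\tfrac34,1]$. With $e_k(x)=\eu^{2\pi\,i\,kx}$ I would then take
$$
g(x)=\sum_{r=1}^N \delta^s\,\psi\!\left(\frac{x-a_r}{\delta}\right),
\qquad f(x)=e_k(x)\,g(x).
$$
Each summand of $g$ is a $C^\infty$ function on $\R/\Z$ supported in its arc (the derivatives of $\psi$ at $0$ and $1$ all vanish), so $g\in C^\infty(\R/\Z)\subseteq\wt H^s$, hence $f\in\wt H^s$ and $f\not\equiv0$; and since no knot lies in the support of $g$, Leibniz's rule forces $f^{(\ell_j)}(x_j)=0$ for all $j$, i.e.\ $A_n(f)=0$.

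Next I would estimate the two quantities. Because multiplication by $e_k$ does not change the $k$th Fourier coefficient,
$$
I_k(f)=\int_0^1 g(x)\dx = N\,\delta^{s+1}\int_0^1\psi \;\ge\; \tfrac34\,\delta^s\int_0^1\psi ,
$$
with no oscillatory cancellation --- this is exactly what the modulation by $e_k$ is for. For the norm I would use $\|f\|_{H^s}^2=|\langle f,1\rangle_0|^2+\|f^{(s)}\|_{L_2}^2$: the first term is at most $(\int_0^1\psi)^2$ since $|\langle f,1\rangle_0|\le\int_0^1 g\le\int_0^1\psi$, while for the second, the $N$ summands of $f$ have pairwise disjoint supports, and for each of them Leibniz's rule together with $|e_k^{(i)}|=(2\pi|k|)^i$ and the scaling identity $\|(\delta^s\psi((\cdot-a_r)/\delta))^{(s-i)}\|_{L_2}=\delta^{i+1/2}\|\psi^{(s-i)}\|_{L_2}$ yields $\|f^{(s)}\|_{L_2}^2\le N\delta\,M_s^2(1+2\pi|k|\delta)^{2s}\le M_s^2(1+2\pi|k|\delta)^{2s}$. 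Hence $\|f\|_{H^s}\le C_s(1+2\pi|k|\delta)^s$ with $C_s=\int_0^1\psi+M_s$, and therefore
$$
\tilde e(A_n)\;\ge\;\frac{|I_k(f)|}{\|f\|_{H^s}}\;\ge\;\frac{3\int_0^1\psi}{4C_s}\cdot\frac{\delta^s}{(1+2\pi|k|\delta)^s}\;=\;\frac{3\int_0^1\psi}{4C_s}\cdot\frac{1}{(4n+2\pi|k|)^s}\;\ge\;\frac{c_s}{(n+|k|)^s},
$$
where $c_s=3\int_0^1\psi\,/\,\bigl(4(2\pi)^s(\int_0^1\psi+M_s)\bigr)>0$, using the elementary bound $4n+2\pi|k|\le2\pi(n+|k|)$. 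Since $A_n$ was an arbitrary algorithm of the form~\eqref{algo}, this proves the lower bound.

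The part I expect to be the only genuinely non-routine point is getting the exponent to be exactly $s$ rather than $s+\tfrac12$. A \emph{single} bump of width $\asymp 1/n$ produces only a ratio of order $(n+|k|)^{-(s+1/2)}$; one must superpose $\asymp n$ disjoint bumps, so that the gain $N\asymp n$ in $I_k(f)$ beats the loss $N^{1/2}\asymp n^{1/2}$ in $\|f\|_{H^s}$. The modulation by $e_k$ is what lets all $N$ bumps be taken with one and the same sign while keeping $|I_k(f)|$ at its full size uniformly in $k$ --- without it, large $|k|$ would destroy the bound through oscillatory cancellation of a sum $\sum_r \pm\,\eu^{-2\pi i k a_r}$. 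Everything else (that at least $3n$ of the $4n$ arcs are knot-free, the Leibniz/rescaling bookkeeping for $\|f^{(s)}\|_{L_2}$, and the inequality $4n+2\pi|k|\le2\pi(n+|k|)$) is routine.
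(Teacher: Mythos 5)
Your argument is correct, and while it follows the same general template as the paper (upper bound quoted from the error bound for $A_n^*$, lower bound via a fooling function that vanishes with all derivatives at the knots), the construction of the fooling function is genuinely different in the way it handles the oscillation. The paper works with the real part $\int_0^1 f(x)\cos(2\pi kx)\,{\rm d}x$, places real bumps with signs $\sgn(\cos(2\pi k y_i))$ on the roughly $2|k|+1$ subintervals where $|\cos(2\pi kx)|\ge 1/\sqrt2$, further subdivided by the knots into $m\le 2|k|+1+n$ pieces of varying lengths, and then needs a small optimization step ($\min\sum_i n_i^{-s-1}$ subject to $\sum_i n_i^{-1}=1/2$) to extract the rate $m^{-s}$; there the factor $|k|$ enters through the \emph{number} of intervals. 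You instead modulate a nonnegative bump sum by $e_k$, so that $I_k(f)=\int_0^1 g$ suffers no cancellation at all, which lets you use only $\Theta(n)$ knot-free arcs of equal length $1/(4n)$ and avoid both the sign bookkeeping and the optimization over interval lengths; in your version $|k|$ enters instead through the Leibniz factor $(1+2\pi|k|\delta)^s$ in $\|f^{(s)}\|_{L_2}$, and the two effects produce the same $(n+|k|)^{-s}$. Your route is arguably cleaner (uniform arcs, explicit constant $c_s=3\int\psi/(4(2\pi)^s(\int\psi+M_s))$, and it automatically kills derivatives of all orders at the knots, matching the paper's remark that the lower bound extends to algorithms using $n\cdot s$ derivative values at $n$ points), at the mild cost of using a complex-valued fooling function, which is legitimate here since $H^s$ consists of complex-valued functions; the paper's construction has the small advantage of being real-valued and of exhibiting explicitly how the oscillatory weight partitions $[0,1]$. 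All the supporting details you rely on check out: periodicity of $e_kg$ on $\R/\Z$ for integer $k$, the simplification $\|f\|_{H^s}^2=|\il f,1\ir_0|^2+\|f^{(s)}\|^2_{L_2}$ on $\wt H^s$, the disjoint-support summation with $N\delta\in[3/4,1]$, the scaling identity for the bumps, and $4n+2\pi|k|\le 2\pi(n+|k|)$.
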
         
        
\begin{proof}         
The upper bound has been already shown for the algorithm $A^*_n$.        
Hence, we only need to prove the lower bound.        
       
Let $A_n$ be an arbitrary algorithm of the form \eqref{algo} that uses       
$f^{(\ell_j)}(x_j)$ for some $\ell_j\in[0,s-1]$ and $x_j\in[0,1]$       
for $j=1,2,\dots,n$.       
Suppose that for $f\in \wt H^s$ we get        
$f^{(\ell_j)}(x_j)=0$ for all $j=1,2,\dots,n$.        
Since $-f$ also belongs to $\wt H^s$,       
the algorithm $A_n$ cannot distinguish between $I_k(f)$ and       
$I_k(-f)=-I_k(f)$. Therefore $|I_k(f)|$ is a lower bound on the worst       
case error of~$A_n$. This leads to a well-known inequality         
$$       
\tilde  e (A_n) \ge \sup \{        
|I_k(f)| \; : \;  f \in \wt H^s, \,        
\Vert f \Vert_{H^s} \le 1, \, N(f) = 0 \},        
$$       
where       
\[      
N(f) = [f^{(\ell_j)} (x_j) , \;  j= 1, 2,\dots ,  n]      
\]      
Below we will construct a function $f$ with large $|I_k(f)|$ and       
all of the $s\cdot n$ values       
$f^{(\ell)} (x_j),   j=1,\dots, n,\, \ell= 0,\dots , s-1$, are      
equal to zero.       
Obviously, such a function $f$ satisfies $N(f)=0$.      
      
We consider a real-valued $f$ and the real part of $I_k(f)$ which is  
$$       
\widehat I_k (f) = \int_0^1 f(x) \cos (2 \pi k x) \, {\rm d} x       
$$       
of $I_k(f)$.        
Define the disjoint        
subintervals $T_{i,k} \subset [0,1]$ such that for all $x \in T_{i,k}$ we have  
$|\cos (2\pi k x)| \ge 1/\sqrt{2}$.        
There are $2|k|+1$ such subintervals. For $k=0$  we have $T_{1,0}=[0,1]$,
whereas for $k\not=0$ the lengths of~$T_{i,k}$'s for       
$i=1, 2, \dots, 2|k|+1$ are         
$\frac{1}{8|k|}, \frac{1}{4|k|}, \dots, \frac{1}{4|k|}, \frac{1}{8|k|}$       
with the total length $1/2$. The points        
$x_1, \dots , x_n$ used by $A_n$ may divide the        
$T_{i,k}$ further and altogether we obtain $m \in[2|k|+1, 2|k|+1+n]$        
intervals $\widehat T_{1,k} \dots , \widehat T_{m,k}$;        
all the endpoints of the $\widehat T_{i,k}$ coincide with an endpoint        
of one of the $T_{i,k}$ or are one of the $x_j$.        
Again, the sum of the lengths of the $\widehat T_{i,k}$ is~$1/2$.        
       
We define        
$\Phi (x) = d_s ( \cos^2 (\pi x/2))^s$ for $|x| \le 1$ and        
$\Phi(x)=0$ otherwise. Then $\Phi \in C^s(\R)$ and we can choose        
$d_s>0$ in such a way that        
$\Vert \Phi\Vert_{H^s([-1,1])} = 1$.        
       
Let the length of the interval        
$\widehat T_{i,k}$ be $1/n_i$ and let $y_i$ be its midpoint.       
For $i=1,2, \dots , m$, we define a scaled version of $\Phi$ by       
$$       
\Phi_i(x) =        
\frac{\sgn(\cos(2\pi k y_i))}{(2 n_i)^s} \,        
\Phi (2n_i x-2n_i y_i)\ \ \ \ \ \mbox{for all}       
\ \ \ \ \ x\in\R.       
$$       
Note that the support of $\Phi_i$ is $\widehat T_{i,k}$ and        
$\|\Phi_i\|_{H^s([-1,1])}\le1$. Furthermore,       
\begin{eqnarray*}       
\widehat I_k(\Phi_i)&=&       
\frac1{(2n_i)^s}\,       
\int_{\widehat T_{i,k}}|\cos(2\pi k x)|\Phi(2n_i(x-y_i))\,{\rm d}x       
\ge        
\frac1{2^{s+1/2}\,n_i^s}\,       
\int_{\widehat T_{i,k}}\Phi(2n_i(x-y_i))\,{\rm d}x\\       
&=&       
\frac{d_s}{2^{s+3/2}\,n_i^{s+1}}\,       
\int_{-1}^{1}(\cos^2(\pi t/2))^s\,{\rm d}t.       
\end{eqnarray*}       
       
Finally we define our ``fooling function''        
by        
$$       
f= \sum_{i=1}^m \Phi_i.       
$$        
It is easy to check that        
$f \in \wt H^s$ with $N(f)=0$ and $\Vert f \Vert_{H^s} \le 1$.        
We can also estimate the integral and obtain        
$$       
|I_k(f)| \ge |\widehat I_k(f)|=       
\sum_{i=1}^m\widehat I_k(\Phi_i)\ge       
\tilde c_s \sum_{i=1}^m n_i^{-s-1}       
$$       
with       
$$       
\tilde c_s=\frac{d_s}{2^{s+3/2}}\,\int_{-1}^{1}(\cos^2(\pi t/2))^s\, 
{\rm d}t>0.       
$$       
It is easy to check by standard means that       
$$       
\min_{n_i:\       
  \sum_{i=1}^mn_i^{-1}=1/2}\,      
\sum_{i=1}^mn_i^{-s-1}=\frac1{2^{s+1}}\,\frac1{m^s}       
\ge \frac1{2^{s+1}\,(2|k|+1+n)^s}\ge       
\frac1{2\cdot 4^s(n+|k|)^s}.       
$$       
This proves the lower bound with $c_s=\tilde c_s/(2\cdot 4^s)$.        
\end{proof}        
We stress that the lower bound in Theorem~\ref{thm:int1} holds       
for a larger class of algorithms than the class~\eqref{algo}      
for $s>1$. Namely it holds for algorithms       
$$      
A_n(f)=\sum_{j=1}^n\,\sum_{\ell=1}^{s-1}a_{j,\ell}\,f^{(\ell)}(x_j)      
$$      
for arbitrary $a_{j,\ell}\in\C$ and $x_j\in[0,1]$. That is, we now use      
$n\cdot s$ values of $f$ and its derivatives       
instead of $n$, however, we still have ``only'' $n$ sample points to      
choose.       
     
Theorem~\ref{thm:int1}         
states that both lower and upper bounds on the        
$n$th minimal error decay with~$|k|$. Does it really mean that high        
oscillation makes the problem easy? The answer to this question        
depends on whether we consider the absolute or normalized error        
criterion.         
       
For the \emph{absolute error criterion}, the information        
complexity~$\tilde n^{\rm abs}(\e,k,s)$ is defined as the minimal         
$n$ for which the error is at most $\e\in(0,1)$. That is,        
$$        
\tilde n^{\rm abs}(\e,k,s)        
=\min\left\{\,n\ |\ \ \tilde e(n,k,s)\le \e\,\right\}.        
$$        
Clearly, $\tilde n^{\rm abs}(\e,k,s)=0$         
for $\e\ge \tilde e(0,k,s)$ since we can solve the problem by the zero        
algorithm. For $\e< \tilde e(0,k,s)$        
we can bound $\tilde n^{\rm abs}(\e,k,s)$  by Theorem~\ref{thm:int1}.        
This implies the following corollary.        
       
\begin{cor}  \label{coro:per_abs_n}      
Consider the absolute error criterion for the integration problem       
$I_k$ defined over the periodic space $\wt H^s$.        
Let $c_s$ be from Theorem~\ref{thm:int1}.       
\begin{itemize}        
\item For $k=0$ and all $\e\in(0,1)$ we have         
$$        
c_s^{1/s}\,\left(\frac1{\e}\right)^{1/s}        
\le \tilde n^{\rm abs}(\e,0,s)\le         
\left\lceil        
  \frac3{2\pi}\left(\frac{\sqrt{2}}{\e}\right)^{1/s}\right\rceil.        
$$        
\item For $k\not=0$ and $\e\in[1/(2\pi|k|)^s,1)$ we have        
$$        
\tilde n^{\rm abs}(\e,k,s)=0,        
$$        
whereas for $\e\in(0,1/(2\pi|k|)^s)$ we have        
$$        
c_s^{1/s}\,\left(\frac1{\e}\right)^{1/s}\,-\,|k|        
\le \tilde n^{\rm abs}(\e,k,s)\le         
\left\lceil        
  \frac3{2\pi}\left(\frac{\sqrt{2}}{\e}\right)^{1/s}\right\rceil\,-\, |k|.        
$$        
\end{itemize}        
\qed       
\end{cor}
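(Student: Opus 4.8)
The plan is to read off both bounds directly from Proposition~\ref{prop1}, Theorem~\ref{thm:int1} and Theorem~\ref{thm:qmc}, using only the elementary facts that $n\mapsto\tilde e(n,k,s)$ is non-increasing and that $\tilde n^{\rm abs}(\e,k,s)=0$ exactly when $\e\ge\tilde e(0,k,s)$. Feeding in the initial errors $\tilde e(0,0,s)=1$ and $\tilde e(0,k,s)=(2\pi|k|)^{-s}$ from Proposition~\ref{prop1} immediately settles the trivial cases: for $k=0$ we are always in the regime $\e<1=\tilde e(0,0,s)$, while for $k\neq0$ the zero algorithm already suffices when $\e\ge(2\pi|k|)^{-s}$, which gives the stated dichotomy.

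For the lower bound I would observe that if $\tilde e(n,k,s)\le\e$ then the left inequality of Theorem~\ref{thm:int1} forces $c_s(n+|k|)^{-s}\le\e$, hence $n+|k|\ge c_s^{1/s}\e^{-1/s}$, i.e.\ $n\ge c_s^{1/s}(1/\e)^{1/s}-|k|$; applied to $n=\tilde n^{\rm abs}(\e,k,s)$ this is exactly the asserted lower bound (and, for $k=0$, the bound $c_s^{1/s}(1/\e)^{1/s}$).

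For the upper bound I would use the algorithm $A_n^{\rm QMC}$ and Theorem~\ref{thm:qmc}(iv), with the specific parameter choice $\alpha=2^{1/(2s)}/3\in(0,1)$: then $2/(2\pi\alpha)^s=\sqrt2\,(3/(2\pi))^s$, so part~(iv) yields
$$
\tilde e(A_n^{\rm QMC})<\sqrt2\,\Bigl(\frac{3}{2\pi}\Bigr)^s\frac1{(n+|k|)^s}\qquad\text{whenever }\ n>\frac{1+\alpha}{1-\alpha}\,|k|.
$$
If $n\ge\lceil\frac{3}{2\pi}(\sqrt2/\e)^{1/s}\rceil-|k|$ then $n+|k|\ge\frac{3}{2\pi}(\sqrt2/\e)^{1/s}$, so the right-hand side above is $\le\e$; combined with the identity $\lceil x-m\rceil=\lceil x\rceil-m$ for integer $m$ this gives the claimed upper bound, provided such an $n$ lies in the admissible range of part~(iv). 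Checking that range is the one point that needs a little care: writing $\beta=2^{1/(2s)}\in[1,2]$, the hypothesis $\e<(2\pi|k|)^{-s}$ gives $n+|k|>3\beta|k|$, hence $n>(3\beta-1)|k|$, and the elementary inequality $(3\beta-1)(3-\beta)\ge3+\beta$ — equivalent to $(\beta-1)(\beta-2)\le0$ — shows $(3\beta-1)|k|\ge\frac{1+\alpha}{1-\alpha}|k|$, so that $n$ is indeed admissible. For $k=0$ the admissibility condition of part~(iv) is vacuous and the same computation goes through with $|k|=0$.

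So the only non-routine step is verifying that the parameter choice $\alpha=2^{1/(2s)}/3$ — which is what produces the factor $\sqrt2$ rather than the cruder $2$ of~\eqref{estanstar} — is compatible with the range restriction $n>\frac{1+\alpha}{1-\alpha}|k|$ in Theorem~\ref{thm:qmc}(iv) for every $n$ the error estimate already forces us to consider; everything else is monotonicity of $\tilde e(n,k,s)$ and ceiling bookkeeping.
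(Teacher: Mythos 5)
Your proposal is correct, and it is in one respect more careful than the paper's own (essentially one-line) derivation. The paper simply says that the trivial cases follow from Proposition~\ref{prop1} and that for $\e<\tilde e(0,k,s)$ one bounds $\tilde n^{\rm abs}(\e,k,s)$ by Theorem~\ref{thm:int1}; your lower bound and the treatment of the regimes $\e\ge\tilde e(0,k,s)$ coincide with that. For the upper bound, however, the displayed bound $\tilde e(n,k,s)\le 2\,(3/(2\pi))^s(n+|k|)^{-s}$ of Theorem~\ref{thm:int1} (equivalently \eqref{estanstar}, obtained from Theorem~\ref{thm:qmc}(iv) with $\alpha=1/3$) only yields the requirement $n+|k|\ge \frac{3}{2\pi}(2/\e)^{1/s}$, i.e.\ a constant $2$ rather than the $\sqrt2$ appearing in the corollary; since $\lceil\frac{3}{2\pi}(\sqrt2/\e)^{1/s}\rceil$ can be strictly smaller than $\frac{3}{2\pi}(2/\e)^{1/s}$, the stated bound does not follow mechanically from that inequality. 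Your detour back to Theorem~\ref{thm:qmc}(iv) with the tailored choice $\alpha=2^{1/(2s)}/3$, giving the error constant $\sqrt2\,(3/(2\pi))^s$, together with the verification that the admissibility condition $n>\frac{1+\alpha}{1-\alpha}|k|$ is automatic in the regime $\e<(2\pi|k|)^{-s}$ (your $(\beta-1)(\beta-2)\le0$ computation, with $\beta=2^{1/(2s)}\in(1,\sqrt2\,]$), is exactly what is needed to justify the $\sqrt2$ literally; the only cosmetic remark is that $\beta\le\sqrt2$, not merely $\beta\le2$, though your weaker range suffices. So the overall architecture matches the paper, but your upper-bound step is a sharper use of Theorem~\ref{thm:qmc}(iv) that closes a small constant-chasing gap the paper glosses over.
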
        
       
This means that for the absolute error criterion the problem becomes easier         
for large $|k|$,        
but the asymptotic behavior of $\tilde n^{\rm abs}(\e,k,s)$, as $\e\to0$,        
does not depend on $k$.       
       
We now turn to the \emph{normalized error criterion} in which we want        
to reduce the initial error $\tilde e(0,k,s)$ by a factor $\e\in(0,1)$. That        
is, the information complexity  $\tilde n^{\rm nor}(\e,k,s)$ is defined as         
$$        
\tilde n^{\rm nor}(\e,k,s)        
=\min\left\{\,n\ |\ \ \tilde e(n,k,s)\le \e\,\tilde e(0,k,s)\,\right\}.        
$$        
In this case we always have $\tilde n^{\rm nor}(\e,k,s)\ge1$.        
Note that for $k=0$ we have $\tilde e(0,0,s)=1$ and there is no difference        
between the normalized and absolute error criteria.        
        
For $k\not=0$ the situation is quite different. From        
Theorem~\ref{thm:qmc}, Theorem~\ref{thm:int1} and Proposition~\ref{prop1}        
it is easy to  prove the following corollary.        
       
\begin{cor}        
Consider the normalized error criterion for the integration problem       
$I_k$ defined over the periodic space $\wt H^s$.        
Let $c_s$ be from Theorem~\ref{thm:int1}.       
       
For all $k\not=0$ and all $\e\in(0,1)$ we have        
$$        
|k|\, \left(2\pi \left(\frac{c_s}{\e}       
\right)^{1/s}-1 \right)       
\le \tilde n^{\rm nor}(\e,k,s) \le \;       
|k|\, \left\lceil 3\left(\frac{\sqrt{2}}{\e}\right)^{1/s}-1\right\rceil,       
$$        
which can be written as       
$$       
\tilde n^{\rm nor}(\e,k,s)      
=\Theta\left(\frac{|k|}{\e^{1/s}}\right)       
\ \ \ \ \ \mbox{as}\ \ \ \ \ \ \e\to0.       
$$       
\qed       
\end{cor}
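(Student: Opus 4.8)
The plan is to turn the normalized criterion into a bare inequality for $\tilde e(n,k,s)$ by using the exact value of the initial error, and then to sandwich $\tilde n^{\rm nor}(\e,k,s)$ between the universal lower bound of Theorem~\ref{thm:int1} and one concrete QMC rule from Theorem~\ref{thm:qmc}. For $k\neq0$, Proposition~\ref{prop1} gives $\tilde e(0,k,s)=(2\pi|k|)^{-s}$, so the defining condition $\tilde e(n,k,s)\le\e\,\tilde e(0,k,s)$ becomes simply $\tilde e(n,k,s)\le\e\,(2\pi|k|)^{-s}$.

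For the lower bound I would insert the universal estimate $\tilde e(n,k,s)\ge c_s/(n+|k|)^s$ from Theorem~\ref{thm:int1}: any $n$ meeting the criterion must satisfy $c_s/(n+|k|)^s\le\e\,(2\pi|k|)^{-s}$, hence $(n+|k|)^s\ge c_s(2\pi|k|)^s/\e$, which rearranges to $n\ge|k|\bigl(2\pi(c_s/\e)^{1/s}-1\bigr)$. Applied to the minimal such $n$ this is precisely the asserted lower bound (and it is vacuous, hence trivially true, when the right-hand side is negative, since $\tilde n^{\rm nor}\ge1$).

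For the upper bound I would produce one admissible algorithm that meets the criterion. For $n>|k|$, Theorem~\ref{thm:qmc}(iii) gives $\tilde e(n,k,s)\le\tilde e(A_n^{\rm QMC})\le\tfrac{2}{(2\pi)^s}(n-|k|)^{-s}$, and demanding this to be $\le\e\,(2\pi|k|)^{-s}$ is the same as $(n-|k|)^s\ge2|k|^s/\e$, i.e.\ $n\ge|k|\bigl(1+(2/\e)^{1/s}\bigr)$; since $\e<1$ forces $(2/\e)^{1/s}>1$, every such $n$ exceeds $2|k|$, so the side condition $n>|k|$ of (iii) holds and the estimate is legitimate. Thus $\tilde n^{\rm nor}(\e,k,s)\le\bigl\lceil|k|(1+(2/\e)^{1/s})\bigr\rceil$, and it remains to dominate this by $|k|\lceil3(\sqrt2/\e)^{1/s}-1\rceil$. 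Using monotonicity of $\lceil\cdot\rceil$ together with $\lceil|k|x\rceil\le|k|\lceil x\rceil$, this reduces to the pointwise inequality $1+(2/\e)^{1/s}\le3(\sqrt2/\e)^{1/s}-1$ for $\e\in(0,1)$, which after writing $u=2^{1/(2s)}\in(1,\sqrt2]$ becomes $u(3-u)\,\e^{-1/s}\ge2$; this holds because $u(3-u)\ge2$ (equivalently $(u-1)(2-u)\ge0$, valid for $u\in[1,2]$) and $\e^{-1/s}\ge1$.

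There is no genuine obstacle here, which is why the corollary is called ``easy'': the only points requiring care are to use the sharp $(n-|k|)^{-s}$ form of the QMC error in (iii) rather than the cruder bound~\eqref{estanstar} (the latter would give a weaker constant), to keep the rounding consistent (the minimal admissible $n$ is a ceiling, and passing to a multiple of $|k|$ loses a little), and to check the harmless side condition $n>|k|$. Finally, the $\Theta$-form is immediate: as $\e\to0$ with $k$ and $s$ fixed, both $|k|\bigl(2\pi(c_s/\e)^{1/s}-1\bigr)$ and $|k|\lceil3(\sqrt2/\e)^{1/s}-1\rceil$ are of the shape $|k|\bigl(\mathrm{const}(s)\,\e^{-1/s}+O(1)\bigr)$, hence each equals $\Theta(|k|\,\e^{-1/s})$ with implied constants depending only on $s$.
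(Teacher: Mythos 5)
Your argument is correct and is essentially the paper's intended proof: the paper offers no separate argument but states that the corollary follows from Proposition~\ref{prop1}, Theorem~\ref{thm:qmc} and Theorem~\ref{thm:int1}, which is exactly the combination you use (exact initial error, the universal lower bound, and the QMC upper bound, each rearranged for $n$). Your use of the sharper bound in Theorem~\ref{thm:qmc}(iii) rather than \eqref{estanstar}, together with the elementary inequality $1+(2/\e)^{1/s}\le 3(\sqrt{2}/\e)^{1/s}-1$ and $\lceil |k|x\rceil\le |k|\lceil x\rceil$, correctly recovers the stated constant with $\sqrt{2}$.
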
        
        
Hence, for the normalized error criterion the problem becomes harder for        
large $|k|$. It is interesting that the dependence        
on $|k|$ is linear and does not depend on~$s$.        
In particular, for fixed $s$ and fixed $\e<(2\pi)^s c_s$ we have      
\begin{equation}\label{pernor}      
\lim_{|k|\to\infty}\tilde n^{\rm nor}(\e,k,s)=\infty.       
\end{equation}      
     
\section{The non-periodic case}   \label{sec:non-periodic}        
        
We now turn to the case of non-periodic functions, i.e.,~we         
consider the Sobolev space        
\begin{equation}         
H^s = \{ f: [0,1] \to \C \mid         
f^{(s-1)} \hbox{ is abs. cont., } f^{(s)} \in L_2 \}      
\end{equation}        
for a finite $s\in\N$. The inner product $\il\cdot,\cdot\ir_s$ in $H^s$         
is again defined by \eqref{eq:inner_product}.      
     
Clearly, for all $j=1,2,\dots,s$ we      
have $H^s\subset H^j$ and $\|f\|_{H^j}\le \|f\|_{H^s}$      
for all $f\in H^s$.      
This follows from the inequality     
\[     
\int_0^1|f'(x)|^2 \dx \;\ge\; \int_0^1|f(x)|^2 \dx \,-\,     
\left(\int_0^1 f(x) \dx \right)^2     
\]     
for differentiable functions $f$     
and implies that the unit ball of $H^s$ is a subset of the unit ball      
of $H^j$.        
     
Again we want to approximate the integral         
\begin{equation}             
I_k (f) := \int_0^1 f(x) \,\eu^{-2\pi\,i\,kx} \, {\rm d} x,        
\end{equation}         
where $k\in\Z$ and $f \in H^s$         
with $s \in \nat$. Without loss of generality we consider       
linear algorithms~$A_n$ of the form~\eqref{algo}.       
Similarly as before, we define the worst case error of $A_n$ as       
\[       
e(A_n) \,:=\, \sup_{f\in H^s, \,\Vert f \Vert_{H^s} \le 1} |I_k(f) -       
A_n(f)|,     
\]       
and the $n$th minimal worst case error as       
\[       
e(n, k, s) \,:=\, \inf_{A_n}\, e(A_n).       
\]       
In particular, the initial error is given by        
\[       
e(0, k, s) := \sup_{f\in H^s, \Vert f \Vert_{H^s} \le 1} |I_k(f)|        
= \Vert I_k \Vert_{H^s\to\C},         
\]       
compare with~\eqref{eq:error} and \eqref{eq:initial}. We do not now       
use the tilde to stress        
the non-periodic case.        
       
Note that $H^s$ is obviously a superset of $\wt H^s$ and hence,        
lower bounds that were proved in Section~\ref{sec:periodic}        
for $\wt H^s$ also hold for $H^s$, i.e.,       
\begin{equation}\label{lownonper}       
e(n,k,s)\ge\tilde e(n,k,s).       
\end{equation}       
We start with the        
computation of the initial error.       
As we shall see, for large $s$ and $|k|$,       
it is now much larger than for the periodic case.       
In particular, the initial error for $k\neq0$ does not tend to        
zero if $s$ tends to infinity.       
       
Similarly to \eqref{eq:repr} we want to compute the representer $h_{k,s}$       
of $I_k$ in $H^s$. Using the same functions $e_k(x) = \eu^{2\pi i k x}$,       
which satisfy $\|e_k\|_{H^s}=(2\pi|k|)^s$,       
we obtain       
\begin{equation}\label{eq:repr2}      
\begin{split}      
\il f,e_k \ir_{s} &= \il f^{(s)}, e_k^{(s)} \ir_0      
 = (-1)^s \il f, e_k^{(2s)} \ir_0        
                +      
\sum_{\ell=0}^{s-1} (-1)^\ell      
\Bigl[f^{(s-\ell-1)} \widebar{e_k^{(s+\ell)}}\Bigr]_0^1\\     
&= (2\pi k)^{2s}\, I_k(f)        
                + (-1)^s\sum_{\ell=1}^{s}(2\pi i k)^{2s-\ell}\,     
                        \Bigl(f^{(\ell-1)}(1)-f^{(\ell-1)}(0)\Bigr).       
\end{split}\end{equation}      
Here we use the fact that $k$ is an integer. 
Surprisingly, the functionals $f^{(\ell-1)}(1)-f^{(\ell-1)}(0)$, 
$\ell=1,\dots,s$,      
or more precisely their representers in $H^s$, have some nice properties      
that will be useful in the following analysis.      
These representers are given by the normalized Bernoulli polynomials       
\begin{equation}\label{eq:bernoulli}     
B^*_\ell(x) \;=\; \frac{1}{\ell!}\, B_\ell(x),      
\end{equation}     
where the Bernoulli polynomials $B_\ell$, $\ell\ge0$, are the       
unique polynomials that are given by       
\[      
\int_t^{t+1} B_\ell(x) \dx \;=\; t^\ell      
\ \ \ \ \mbox{for all} \ \ \ t\in\reals\ \ \mbox{with}\ \ 0^0=1.     
\]      
To see this,  note that $[B^*_\ell]' =B^*_{\ell-1}$ as well as      
$\int_0^1 B^*_\ell(x)\dx=0$, $\ell\ge1$, and       
$\int_0^1 B^*_0(x)\dx=1$.      
In particular, this implies for $f\in H^s$ and $\ell\le s$ that      
\[      
\il f,B^*_\ell\ir_{s} =      
\il f^{(\ell)},1\ir_0 = f^{(\ell-1)}(1) - f^{(\ell-1)}(0),      
\]      
which proves the claim.     
Additionally, this shows      
\begin{equation}\label{eq:bernoulli2}     
\|B^*_\ell\|_{H^s} =1 \quad\text{ and }\quad \il B^*_\ell, B^*_m\ir_{s}=0      
\end{equation}     
for $\ell,m\in\{0,1,\dots,s\}$ with $\ell\neq m$ and, consequently,     
\[      
H^s \;=\; \wt H^s \oplus \{B^*_1\} \oplus  \dots \oplus \{B^*_s\},      
\]      
see e.g.~\cite[Section~10.2]{W90}.     
     
Using \eqref{eq:repr2} we obtain the following proposition.      
      
\begin{prop}    \label{prop:non_initial}        
Let $k\neq0$. The representer of $I_k$ in $H^s$ is        
$$       
h_{k,s} (x) = (2\pi k)^{-2s}\, \eu^{2\pi\,i\, k x}        
\,-\, \sum_{\ell=1}^{s}(-1)^\ell      
(2\pi i k)^{-\ell}\, B^*_\ell(x)      
$$       
and the initial error is         
$$       
e(0, k, s) =        
\Vert h_{k,s} \Vert_{H^s}  =\;       
\sqrt{\frac{2}{\bigl(2\pi k \bigr)^{2s}} +       
        \sum_{\ell=1}^{s-1}\frac{1}{\bigl(2\pi k \bigr)^{2\ell}}}     
\;=\; \frac{\beta_{k,s}}{2\pi |k|}      
$$       
with $\beta_{k,1}=\sqrt{2}$ and      
\[     
1 \,\le\, \beta_{k,s}      
\,=\, \sqrt{\frac{(2\pi k)^{2s}+(2\pi k)^{2}-2}{(2\pi k)^{2s}-(2\pi k)^{2(s-1)}}}     
\,\le\, \sqrt{1+ \frac{2}{(2\pi k)^2-1}}\,\le 1.02566     
\]     
for $s>1$. Note that $\lim_{k\to\infty} \beta_{k,s} =1$.     
     
For $k=0$, the representer is $h_{0,s}=1$ and     
the initial error is one, $\tilde e(0,0,s)=1$.        
\end{prop}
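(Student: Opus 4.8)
The plan is to obtain $h_{k,s}$ directly from the integration-by-parts identity~\eqref{eq:repr2} and then to compute $\|h_{k,s}\|_{H^s}$ by a Pythagorean argument resting on the orthogonality relations~\eqref{eq:bernoulli2}.

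First I would rewrite the boundary terms in~\eqref{eq:repr2}. Since $i^{2s}=(-1)^s$, one has $(2\pi i k)^{2s-\ell}=(-1)^s(2\pi k)^{2s}(2\pi i k)^{-\ell}$, so the two factors $(-1)^s$ in~\eqref{eq:repr2} cancel; dividing by $(2\pi k)^{2s}\neq0$ and using the identity $\il f,B^*_\ell\ir_s=f^{(\ell-1)}(1)-f^{(\ell-1)}(0)$ established just before the proposition yields
$$I_k(f)\;=\;(2\pi k)^{-2s}\,\il f,e_k\ir_s\;-\;\sum_{\ell=1}^{s}(2\pi i k)^{-\ell}\,\il f,B^*_\ell\ir_s .$$
Because $\il\cdot,\cdot\ir_s$ is conjugate linear in its second argument, $(2\pi k)^{-2s}$ is real, the $B^*_\ell$ are real valued, and $\overline{(2\pi i k)^{-\ell}}=(-2\pi i k)^{-\ell}=(-1)^\ell(2\pi i k)^{-\ell}$, this right-hand side equals $\il f,h_{k,s}\ir_s$ with $h_{k,s}$ exactly as stated; by Riesz's theorem $h_{k,s}$ is therefore the representer and $e(0,k,s)=\|h_{k,s}\|_{H^s}$.

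For the norm I would use that $e_k$ is periodic, hence $e_k\in\wt H^s$ and $\il e_k,B^*_\ell\ir_s=e_k^{(\ell-1)}(1)-e_k^{(\ell-1)}(0)=0$ for $\ell=1,\dots,s$; together with the orthonormality of $B^*_1,\dots,B^*_s$ from~\eqref{eq:bernoulli2}, the expansion of $h_{k,s}$ is orthogonal. With $\|e_k\|_{H^s}^2=(2\pi|k|)^{2s}$ and $|(2\pi i k)^{-\ell}|^2=(2\pi|k|)^{-2\ell}$ this gives $\|h_{k,s}\|_{H^s}^2=(2\pi k)^{-2s}+\sum_{\ell=1}^{s}(2\pi k)^{-2\ell}$; peeling off the $\ell=s$ term and merging it with the leading term produces $\frac{2}{(2\pi k)^{2s}}+\sum_{\ell=1}^{s-1}(2\pi k)^{-2\ell}$, which is the displayed formula for $e(0,k,s)^2$.

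It then remains to massage this into the form involving $\beta_{k,s}$. For $s=1$ the sum is empty, so $\beta_{k,1}=\sqrt2$. For $s>1$, put $x=(2\pi k)^2$; summing the geometric series gives $\beta_{k,s}^2=(2\pi k)^2 e(0,k,s)^2=2x^{1-s}+\sum_{j=0}^{s-2}x^{-j}$, and clearing denominators over $x^{s-1}(x-1)$ yields the closed form $\beta_{k,s}^2=\frac{x^s+x-2}{x^{s-1}(x-1)}$, i.e.\ the stated rational expression in $(2\pi k)$. The bound $\beta_{k,s}\ge1$ is equivalent to $x^{s-1}+x\ge2$, true since $x\ge(2\pi)^2$; the bound $\beta_{k,s}^2\le1+\frac2{x-1}$ is equivalent to $x^{s-1}\ge x-2$, true for $s\ge2$; and since $1+\frac2{x-1}$ is decreasing in $x$, evaluating it at the smallest admissible value $x=(2\pi)^2$ gives $\beta_{k,s}\le\sqrt{1+2/((2\pi)^2-1)}<1.02566$, while $\frac2{x-1}\to0$ gives $\lim_{k\to\infty}\beta_{k,s}=1$. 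Finally $k=0$ is immediate: $I_0(f)=\il f,1\ir_0=\il f,1\ir_s$ since $1^{(\ell)}\equiv0$ for $\ell\ge1$, so $h_{0,s}=1$ and $e(0,0,s)=\|1\|_{H^s}=1$. The whole argument is elementary; the only steps needing care are the sign bookkeeping when moving the complex coefficients into the conjugate-linear second slot of $\il\cdot,\cdot\ir_s$ (so that the signs $(-1)^\ell$ in $h_{k,s}$ come out right) and the algebra turning the geometric sum into exactly the advertised rational expression and its bounds.
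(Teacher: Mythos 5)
Your proposal is correct, and it is essentially the argument the paper has in mind: the paper states the proposition immediately after establishing the integration-by-parts identity~\eqref{eq:repr2}, the formula $\il f,B^*_\ell\ir_s=f^{(\ell-1)}(1)-f^{(\ell-1)}(0)$, and the orthogonality facts~\eqref{eq:bernoulli2} together with the decomposition $H^s=\wt H^s\oplus\{B^*_1\}\oplus\cdots\oplus\{B^*_s\}$, and omits the routine computation. You have filled in exactly those details — the sign bookkeeping from $i^{2s}=(-1)^s$ and from the conjugate-linearity of the second slot, the Pythagorean computation of $\|h_{k,s}\|_{H^s}^2$ using $e_k\perp B^*_\ell$ and $\|e_k\|_{H^s}^2=(2\pi|k|)^{2s}$, and the geometric-sum algebra giving the closed form for $\beta_{k,s}^2$ and its bounds — all correctly.
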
         
        
\vskip 1pc     
We are ready to discuss algorithms for the non-periodic case.     
One of the ideas to get such algorithms is first to periodize functions     
$f$ from $H^s$ by computing      
$f^{(0)}(0), \dots , f^{(s-1)}(0)$ and        
$f^{(0)}(1), \dots , f^{(s-1)}(1)$, and then apply the algorithm      
$A^*_{n-2s}$ from Section~\ref{sec:periodic}. Of course, this requires     
to assume that $n\ge 2s$ which is a bad assumption if $s$ is      
large or even impossible     
to satisfy if $s=\infty$. Therefore for $n<2s$ we need to proceed differently.      
As already discussed $f\in H^s$ implies that $f\in H^j$ for all $j\le s$.      
Therefore      
we can use periodization for $H^j$ by computing      
$f^{(0)}(0), \dots , f^{(j-1)}(0)$ and        
$f^{(0)}(1), \dots , f^{(j-1)}(1)$ as long as $n\ge2j$.      
Then we can again apply the algorithm $A^*_{n-2j}$   from      
Section~\ref{sec:periodic}.      
Formally, this algorithm was studied      
only for $H^s$ but it is obvious that its error     
can be also analyzed for $H^j$ with the change of $s$ to $j$.     
     
Another idea to obtain algorithms for small $n$ relative to $s$      
is to use the integration of Taylor's expansion      
of $f\in H^s$ at $\tfrac12$. As we shall see this approach is      
appropriate if $|k|$ is relatively small with respect to $n$.       
To explain these ideas more precisely we need some preparations.

\subsection{Periodization}\label{subsec:periodization}     
     
Let $1\le j\le s$ be given. For $f\in H^s$, we compute   
$$     
f^{(0)}(0), \dots , f^{(j-1)}(0)\ \ \  \mbox{and}\ \ \         
f^{(0)}(1), \dots , f^{(j-1)}(1).     
$$     
With this information we define a 
polynomial $p_{f,j}$ of degree at most $j$ such that        
$\tilde f_j=f-p_{f,j}$ is a periodic function from $\wt H^j$.     
To obtain the polynomial $p_{f,j}$,        
we use     
the normalized Bernoulli polynomials from \eqref{eq:bernoulli}.      
In particular, $B^*_0(x)=1$ and $B^*_1(x)=x-\tfrac12$.      
For $m\ge1$, we have       
$[B^*_{m}]'= B^*_{m-1}$ which yields       
\begin{equation}\label{ber0}       
[B^*_m]^{(\ell)}=B^*_{m-\ell}\ \ \ \ \     
\mbox{for all}\ \ \ \ \ \ell=0,1,\dots,m.       
\end{equation}       
Furthermore,       
\begin{equation}\label{ber2}       
B^*_1(1)-B^*_1(0)=1\ \ \ \ \  \mbox{and}\ \ \ \ \     
B^*_m(1)-B^*_m(0)=0\ \ \ \mbox{for all}\ \ \ m\neq1.       
\end{equation}       
       
For $f\in H^s\subset H^j$, we define the polynomials $p_{f,j}$ by        
\begin{equation}    \label{eq:poly}       
p_{f,j}(x) \;:=\; \sum_{m=0}^{j-1}\,        
\bigl(f^{(m)}(1)-f^{(m)}(0)\bigr)\,B^*_{m+1}(x).       
\end{equation}       
We stress that the computation of the value $p_{f,j}(x)$      
requires the $2j$ values       
of $f^{(m)}(1)$ and $f^{(m)}(0)$ for~$m=0,1,\dots,j-1$.         
       
For $\ell=0,1,\dots,j-1$, we conclude from~\eqref{ber0} that        
$$       
p_{f,j}^{(\ell)}(x)=\sum_{m=\max(0,\ell-1)}^{j-1}       
\bigl(f^{(m)}(1)-f^{(m)}(0)\bigr)\,B^*_{m+1-\ell}(x).       
$$       
Using~\eqref{ber2} we obtain        
\[       
p_{f,j}^{(\ell)}(1) - p_{f,j}^{(\ell)}(0) \;=\; f^{(\ell)}(1) - f^{(\ell)}(0)       
\ \ \ \ \        
\mbox{for all}\ \ \ \ \ \ell=0,1,\dots,j-1.        
\]       
This implies that $f-p_{f,j}\in \wt H^j$ for all $f\in H^s$.       
     
Since $f-p_{f,j}\in\wt H^j$ and the norm of $I_k$ restricted to      
the space $\wt H^j$ is given by     
Proposition~\ref{prop1} with $s$ replaced by $j$, we know that     
$$     
|I_k(f)-I_k(p_{f,j})|\;\le\;      
\left\|I_k\big|_{\wt H^j}\right\|\,\|f-p_{f,j}\|_{H^j}     
\;=\; \frac{\|f-p_{f,j}\|_{H^j}}{\max\{1,(2\pi|k|)^j\}}     
$$     
and, by Parseval's identity (in $\wt H^j$), that     
\begin{equation}\label{eq:parseval}     
\begin{split}      
\|f-p_{f,j}\|_{H^j}^2      
\;&=\; \abs{\il f-p_{f,j}, 1\ir_j}^2    
\;+\; \sum_{\ell\in\Z\setminus0} \abs{\il f-p_{f,j}, e_\ell\ir_j}^2 
(2\pi \ell)^{-2j}\\      
\;&=\; \abs{\il f, 1\ir_j}^2      
\;+\; \sum_{\ell\in\Z\setminus0} \abs{\il f, e_\ell\ir_j}^2 (2\pi \ell)^{-2j}
\;\le\; \|f\|_{H^j}^2 \;\le\; \|f\|_{H^s}^2.      
\end{split}     
\end{equation}     
Here we used the fact that $\il p_{f,j}, e_k\ir_j=0$ for all $k\in\Z$.     
This proves that     
$$     
|I_k(f)-I_k(p_{f,j})|\;\le\;      
\frac{\|f\|_{H^s}}{\max\{1,(2\pi|k|)^j\}}.    
$$     
Note that the last      
upper bound is not small for $k=0$. However, if $k\not=0$ then     
for $j\in[1,s]$ for all $f\in H^s$ we have     
\begin{equation}\label{knotzero}     
|I_k(f)-I_k(p_{f,j})|\;\le\; \frac{\|f\|_{H^s}}{(2\pi|k|)^j},     
\end{equation}      
which is exponentially small in $j$.      
     
We now show how to compute $I_k(p_{f,j})$ exactly. Indeed,        
$$       
I_k(p_{f,j})=\sum_{m=0}^{j-1}\bigl(f^{(m)}(1)-f^{(m)}(0)\bigr)\,       
I_k(B^*_{m+1}),       
$$       
and it is enough to compute $I_k(B^*_{m+1})$.       
For $k=0$ we have $I_0(B^*_{m+1})=\int_0^1B^*_{m+1}(x)\,{\rm d}x=0$        
for all $m=0,1,\dots, j-1$. Hence, $I_0(p_{f,j})=0$.       
       
For $k\not=0$, we use the Fourier       
expansion of the normalized Bernoulli polynomials $B^*_{m+1}$,       
\begin{equation}\label{berfou}     
B^*_{m+1}(x)=-\frac{1}{(2\pi\,i\,)^{m+1}}\,       
\sum_{\ell\in\Z\setminus 0}\frac{e^{2\pi\,i\,\ell\,x}}{\ell^{j+1}}       
\ \ \ \ \ \mbox{for all}\ \ \ \ \ x\in[0,1].       
\end{equation}     
This yields       
$$       
I_k(B^*_{m+1})=-\frac{1}{(2\pi\,i)^{m+1}}\,       
\sum_{\ell\in\Z\setminus 0}\frac1{\ell^{j+1}}\,       
\int_0^1e^{2\pi\,i\,(\ell-k)\,x}\,{\rm d}x=       
-\frac{1}{(2\pi\,i k)^{m+1}}.       
$$       
Hence,       
$$       
I_k(p_{f,j})=       
\begin{cases}       
\ 0&\ \ \mbox{for}\ \ k=0\\       
\       
-\sum_{\ell=0}^{j-1}\frac{f^{(\ell)}(1)-f^{(\ell)}(0)}{(2\pi\,i k)^{\ell+1}}&       
\ \ \mbox{for}\ \ k\not=0.       
\end{cases}       
$$       
For $k\neq0$,     
the computation of $I_k(p_{f,j})$ requires       
the $2j$ values of $f^{(\ell)}(1)$ and      
$f^{(\ell)}(0)$ for $\ell=0,1,\dots,j-1$       
which are also needed for the computation of $p_{f,j}(x)$.

\subsection{Taylor's  Expansion}\label{subsec:taylor}     
For $n\in[1, s]$, we use Taylor's expansion of $f\in H^s$ at $\tfrac12$. Let     
$$     
T_{f,n}(x)=f(\tfrac12)+f^{\prime}(\tfrac12)(x-\tfrac12)+\cdots+\frac{f^{(n-1)}(\tfrac12)}{(n-1)!}\,(x-\tfrac12)^{n-1}     
\ \ \ \ \ \mbox{for all}\ \ \ \ \ x\in[0,1].     
$$       
Then     
$$     
f(x)-T_{f,n}(x)=     
\frac{(x-\tfrac12)^n}{(n-1)!}\,\int_0^1(1-t)^{n-1}\,f^{(n)}\left(\tfrac12+t(x-\tfrac12)\right)     
\,{\rm d}t     
\ \ \ \ \ \mbox{for all}\ \ \ \ \ x\in[0,1].     
$$      
This allows us to estimate $I_k(f-T_{f,n})$ since     
$$     
I_k(f-T_{f,n})= \frac1{(n-1)!}\,\int_0^1\int_0^1     
\,\eu^{-2\pi\,i\,kx} \,      
(x-\tfrac12)^n\,(1-t)^{n-1}\,f^{(n)}\left(\tfrac12+t(x-\tfrac12)\right)     
\,{\rm d}t\, {\rm d} x,     
$$     
and      
$$     
|I_k(f-T_{f,n})|\le \frac1{(n-1)!}\,\int_0^1\int_0^1     
|x-\tfrac12|^n\,|f^{(n)}\left(\tfrac12+t(x-\tfrac12)\right)|     
\,{\rm d}t\, {\rm d} x.     
$$     
We now change variables $y=\tfrac12+t(x-\tfrac12)\in[0,1]$  
     so that ${\rm d}y=     
(x-\tfrac12){\rm d}t$ and then     
$$     
|I_k(f)-I_k(T_{f,n})|\le      
\frac1{(n-1)!}\,\int_0^1\int_0^1     
|x-\tfrac12|^{n-1}\,|f^{(n)}(y)|     
\,{\rm d}x\, {\rm d} y \le \frac1{2^{n-1}\,n!}\,\|f^{(n)}\|_{L_2}.     
$$     
This proves that for $n\in[1, s]$ and all $f\in H^s$ we have      
\begin{equation}\label{Taylor}     
|I_k(f)-I_k(T_{f,n})|\le      
\frac1{2^{n-1}\,n!}\,\|f\|_{H^s}.      
\end{equation}     
     
Furthermore, we can compute $I_k(T_{f,n})$ exactly if we know      
$f(\tfrac12),f^{\prime}(\tfrac12),\dots,f^{(n-1)}(\tfrac12)$. Indeed,     
$$     
I_k(T_{f,n})=\sum_{\ell=0}^{n-1}f^{(\ell)}(\tfrac12)\,     
\frac1{\ell!}\,\int_0^1\,\eu^{-2\pi\,i\,kx} \, (x-\tfrac12)^{\ell}\,{\rm d}x.     
$$     
For $k=0$, we have      
\begin{equation}\label{taylorko}     
I_0(T_{f,n})=\sum_{\ell=0}^{n-1}f^{(\ell)}(\tfrac12)\,     
\frac{1+(-1)^\ell}{2^{\ell+1}(\ell+1)!}.     
\end{equation}     
{}For $k\not=0$, we use integration by parts and show that     
$$     
\frac1{\ell!}\,\int_0^1\,\eu^{-2\pi\,i\,kx} \, (x-\tfrac12)^{\ell}\,{\rm d}x=     
\frac1{(2\pi\,i\,k)^{\ell+1}}\,\sum_{m=0}^{\ell}\frac{i^m\,(k\pi)^m((-1)^m-1)}{m!}.     
$$     
Hence for $k\not=0$, we have     
\begin{equation}\label{taylorknoto}     
I_k(T_{f,n})=\sum_{\ell=0}^{n-1}\frac{f^{(\ell)}(\tfrac12)}     
{(2\pi\,i\,k)^{\ell+1}}\,\sum_{m=0}^{\ell}\frac{i^m\,(k\pi)^m((-1)^m-1)}{m!}.     
\end{equation}     
     
\subsection{Algorithms}\label{subsec:algorithms}     
     
With the preparations done in the previous two subsections,      
we are ready to define      
algorithms for the non-periodic case.      
     
\begin{itemize}     
\item     
Assume first that $k\in\Z\setminus\{0\}$.     
\end{itemize}      
      
We discuss algorithms based on periodization for $f\in H^s$.     
We define the algorithm $A_n^{\rm Per}$      
for all even $n\in[2,2s)$     
and for $n=2s+\ell$ with $\ell \in \N_0$.     
     
For even $n\in[2,2s)$ we compute     
$f^{(0)}(0), \dots,f^{((n-2)/2)}(0)$,      
$f^{(0)}(1), \dots,f^{((n-2)/2)}(1)$, and define      
$$     
A_{n}^{\rm Per}(f)=I_k(p_{f,n/2})     
$$     
with $p_{f,n/2}$ given by~\eqref{eq:poly} for $j=n/2\le s$.      
     
For $n= 2s+\ell$ with $\ell \in \N_0$, we compute      
$f^{(0)}(0), \dots,f^{(s-1)}(0)$,      
$f^{(0)}(1), \dots,f^{(s-1)}(1)$ to obtain the polynomial $p_{f,s}$.     
Then we define      
$$     
A_n^{\rm Per}(f)=I_k(p_{f,s})+A^*_{\ell+1}(f-p_{f,s}) 
$$     
with the algorithm $A^*_{\ell+1}$ from Section~\ref{sec:periodic}     
defined by~\eqref{eq:A_per}.      
The algorithm $A^*_{\ell+1}$ uses no extra 
information on $f$ if $\ell<2|k|-1$.     
For $\ell\ge2|k|-1$, the algorithm $A^*_{\ell+1}$      
uses extra $\ell$ function values at      
$j/(\ell+1)$ for $j=1,2,\dots,\ell$. Note that we have already computed     
the function value at $j/(\ell+1)$ for $j=\ell+1$.      
      
We stress that the algorithm $A_{n}^{\rm Per}$ is well defined 
for $n=2s+\ell$       
since $f-p_f\in\wt H^s$ and $\wt H^s$ is the domain of the algorithm       
$A^*_{\ell+1}$. For $f\in\wt H^s$ we have $p_{f,j}=0$ for all $j\in[1,s]$,      
and therefore $A_n^{\rm Per}(f)=0$ for all even $n\in[2,2s)$ and     
$A_n^{\rm Per}(f)=A^*_{\ell+1}(f)$ for $n=2s+\ell$.      
       
The algorithm $A_n^{\rm Per}$ uses at most $n$ evaluations of $f$.     
Indeed, for even $n\in[2,2s)$ it uses $n/2$ evaluations at the endpoint     
points $x=0$ and $x=1$, so that the total number is~$n$.      
For $n=2s+\ell$, the algorithm $A_n^{\rm Per}$ uses two function      
values and $2(s-1)$ values of derivatives     
of $f$ at $x=0$ and $x=1$, as well as at most $\ell$      
functions values at $j/(\ell+1)$ for $j=1,\dots,\ell$,      
which is $2+2(s-1)+\ell=2s+\ell=n$, as claimed.      
     
{}From  the formulas of Sections~\ref{sec:periodic}      
and~\ref{subsec:periodization} we find the explicit form of $A_n^{\rm Per}$.       
{}For even $n\in[2,2s)$ and $n=2s+\ell$ with $\ell<2|k|-1$ we have     
$$     
A_n^{\rm Per}(f)=\sum_{j=0}^{(n-2)/2}\frac{f^{(j)}(0)-f^{(j)}(1)}{(2\pi i\,k)^{j +1}},      
$$     
whereas for $n=2s+\ell$ with $\ell\ge2|k|-1$, we have      
\begin{eqnarray*}     
A_n^{\rm Per}(f)&=&     
\sum_{j=0}^{s-1}\frac{f^{(j)}(0)-f^{(j)}(1)}{(2\pi i\,k)^{j +1}}\,+\\     
\,&&     
\frac1{\ell+1}\,\sum_{j=1}^{\ell+1}\left(f\left(\frac{j}{\ell+1}\right)-p_{f,s}\left(\frac{j}{\ell+1}\right)\right)\,       
\exp^{-2\pi\,i\,k\,j/(\ell+1)}.       
\end{eqnarray*}     
     
Note that for $s=1$ the algorithm $A_{n}^{\rm Per}$ uses only       
function values since $p_{f,1}(x)=(f(1)-f(0))(x-\tfrac12)$,      
whereas for $s\ge2$ it also uses derivatives of $f$.       
The weights used by the algorithm $A_{n}^{\rm Per}$ are complex.       
However, the sum of their absolute values is bounded by an absolute       
constant independent of $n$ since it is 
known that the values of the normalized       
Bernoulli polynomials $B^*_j$, which are 
present in $p_{f,s}$, are exponentially       
small in~$j$. This implies numerical stability of the algorithm~$A_{n}$.        
       
Obviously, the derivatives $f^{(j)}(0)$ and $f^{(j)}(1)$ for       
$j=1,2,\dots,s-1$ can be approximated 
by computing $2(s-1)$ extra function values;
the worst case error of this approximation
for functions from the unit ball of $H^s$ can be 
made arbitrarily small. 
Hence, there are algorithms       
that use only $n$ function values and they have a worst case error         
arbitrarily close to the worst case error of $A_{n}^{\rm Per}$.         
However, stability of such algorithms is not clear since then we must       
use huge coefficients.  
We leave it as an open problem for $s\ge2$ if there are       
stable algorithms that use only $n$ function values and whose worst case       
error are comparable to the algorithm $A_{n}^{\rm Per}$.          
       
We are ready to bound the worst case error of $A_{n}^{\rm Per}$.       
     
\begin{thm}\label{thm:ans}       
For $k\not=0$, we have      
\begin{itemize}     
\item for even $n\in[2,2s]$,     
$$     
e(A_n^{\rm Per})\le     
\frac{1}{(2\pi |k|)^{n/2}},     
$$     
\item for $n>2s$,      
$$     
e(A_n^{\rm Per})\le     
\left(\frac3{2\pi}\right)^s\,\frac{2}{(n-2s+1+|k|)^s}.     
$$     
\end{itemize}       
\qed     
\end{thm}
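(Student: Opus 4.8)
The plan is to reduce both bounds to facts already established for the periodic case: in each regime the algorithm $A_n^{\rm Per}$ evaluates $I_k$ exactly on a polynomial part of $f$ and then applies a periodic-case algorithm to the remainder, which lies in a periodic Sobolev space whose $H^s$-norm does not exceed $\|f\|_{H^s}$.

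For even $n\in[2,2s]$ I would first check that the two branches of the definition of $A_n^{\rm Per}$ agree at $n=2s$: since $k\neq0$ we have $1<2|k|$, so $A^*_1=0$ by~\eqref{eq:A_per}, and hence $A_{2s}^{\rm Per}(f)=I_k(p_{f,s})$, which coincides with the first branch taken at $j=n/2=s$. Thus for every even $n\in[2,2s]$ one has $A_n^{\rm Per}(f)=I_k(p_{f,n/2})$ with $j:=n/2\in[1,s]$, so by linearity $I_k(f)-A_n^{\rm Per}(f)=I_k(f-p_{f,n/2})$. Since $f-p_{f,n/2}\in\wt H^{n/2}$ by Section~\ref{subsec:periodization} and $k\neq0$, I would invoke~\eqref{knotzero} with $j=n/2$ to get $|I_k(f-p_{f,n/2})|\le\|f\|_{H^s}/(2\pi|k|)^{n/2}$; taking the supremum over the unit ball of $H^s$ then gives the first bound.

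For $n>2s$, write $n=2s+\ell$ with $\ell\ge1$; then $A_n^{\rm Per}(f)=I_k(p_{f,s})+A^*_{\ell+1}(f-p_{f,s})$, so setting $g:=f-p_{f,s}$ and using linearity of $I_k$ gives $I_k(f)-A_n^{\rm Per}(f)=I_k(g)-A^*_{\ell+1}(g)$. The periodization identities of Section~\ref{subsec:periodization} show $g\in\wt H^s$, and the $j=s$ instance of~\eqref{eq:parseval} gives $\|g\|_{H^s}\le\|f\|_{H^s}$. Hence, by the definition of the worst case error of $A^*_{\ell+1}$ on $\wt H^s$, $|I_k(f)-A_n^{\rm Per}(f)|\le\tilde e(A^*_{\ell+1})\,\|g\|_{H^s}\le\tilde e(A^*_{\ell+1})\,\|f\|_{H^s}$, so $e(A_n^{\rm Per})\le\tilde e(A^*_{\ell+1})$. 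Then I would apply~\eqref{estanstar} of Corollary~\ref{coranstar} with $n$ replaced by $\ell+1$ to obtain $\tilde e(A^*_{\ell+1})\le(3/2\pi)^s\cdot 2/(\ell+1+|k|)^s$; since $\ell+1=n-2s+1$, this is exactly the second bound.

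The substantive work is already behind us — the exact and near-optimal error analysis of $A^*_n$ in Theorem~\ref{thm:qmc} and Corollary~\ref{coranstar}, together with the periodization estimates~\eqref{knotzero} and~\eqref{eq:parseval} of Section~\ref{subsec:periodization} — so what remains is essentially bookkeeping rather than a genuine obstacle. The only points that need a moment's care are the consistency of the two branches of the definition of $A_n^{\rm Per}$ at $n=2s$ (so that the first bound genuinely covers $n=2s$), and making sure the quantity controlled in $\|f-p_{f,s}\|_{H^s}\le\|f\|_{H^s}$ is the full $H^s$-norm of the periodized remainder rather than an $H^j$-norm with $j<s$; the latter is precisely what~\eqref{eq:parseval} delivers when specialized to $j=s$.
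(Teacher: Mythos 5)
Your proof is correct and follows essentially the same route as the paper's: bound $I_k(f)-I_k(p_{f,n/2})$ via~\eqref{knotzero} for even $n\le 2s$, and for $n=2s+\ell$ reduce to the periodic algorithm $A^*_{\ell+1}$ applied to $f-p_{f,s}\in\wt H^s$, using~\eqref{estanstar} together with $\|f-p_{f,s}\|_{H^s}\le\|f\|_{H^s}$ from~\eqref{eq:parseval}. Your explicit check that the two branches of the definition agree at $n=2s$ (since $A^*_1=0$ for $k\neq0$) is a small point the paper leaves implicit, but it changes nothing of substance.
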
       
     
\begin{proof}       
{}For even $n\in[2,2s]$, we have     
$$     
I_k(f)-A_n^{\rm Per}(f)=I_k(f)-I_k(p_{f,n/2})     
$$     
and~\eqref{knotzero} implies the bound on $e(A_n)$.      
     
{}For $n=2s+\ell$, we clearly have      
$f=f-p_{f,s}+p_{f,s}$ for all $f\in H^s$.      
By definition of $A_n^{\rm Per}$ and linearity of $I_k$ we obtain     
\[     
I_k(f)-A_n^{\rm Per}(f)     
\,=\, I_k(f-p_{f,s})-A^*_{\ell+1}(f-p_{f,s}).  
\]     
{}From~\eqref{estanstar} we know that     
$$     
|I_k(f-p_{f,s})-A^*_{\ell+1}(f-p_{f,s})|\le      
\left(\frac3{2\pi}\right)^s\,\frac2{(n-2s+1+|k|)^s}\,\|f-p_{f,s}\|_{H^s}.     
$$     
Then~\eqref{eq:parseval} with $j=s$ 
yields $\|f-p_{f,s}\|_{H^s}\le\|f\|_{H^s}$,     
which implies the bound on $e_n(A_n^{\rm Per})$.     
This completes the proof.     
\end{proof}       
     
\begin{itemize}      
\item     
Assume now that $k\in\Z$.     
\end{itemize}      
     
Although $k$ is now an arbitrary integer, our emphasis      
will be later on $k=0$ or, more generally, on $|k|$ small relative to $n$.      
We discuss algorithms based on Taylor's expansion and 
periodization for $f\in H^s$.     
We define the algorithm $A_n^{\rm Tay-Per}$      
for all  $n\in[1,s]$ and for $n=2s+\ell$ with $\ell \in \N_0$.      
     
For $n\in[1,s]$, we compute $f^{(0)}(\tfrac12),\dots, f^{(n-1)}(\tfrac12)$     
and define     
$$     
A_n^{\rm Tay-Per}(f)=I_k(T_{f,n}),     
$$     
where $T_{f,n}$ is Taylor's expansion of $f$ at $\tfrac12$ up 
to the $(n-1)$st derivative and     
$I_k(T_{f,n})$ is explicitly given by~\eqref{taylorko} for      
$k=0$ and by~\eqref{taylorknoto} for $k\not=0$.     
For $n=2s+\ell$ with $\ell \in \N_0$, we define     
$$     
A_n^{\rm Tay-Per}(f)=A_n^{\rm Per}(f),     
$$     
where $A_n^{\rm Per}$ is from the previous subsection.      
     
Clearly, the algorithm $A_n^{\rm Tay-Per}$ uses 
at most $n$ evaluations of $f$.      
For $s=1$ it uses only function values and it is defined for all $n$.     
For $s\ge2$, it also uses derivatives of $f$ and it is not defined for     
$n\in[s+1,2s-1]$. 
Its weights are complex and the sum of their absolute values      
is uniformly bounded in $n$. Hence, $A_n^{\rm Tay-Per}$ is 
stable, and a similar     
remark on the approximation of derivatives by function values can be made      
as for the algorithm $A_n^{\rm Per}$.      
     
The worst case error of $A_n^{\rm Tay-Per}$ can be easily bounded       
by~\eqref{Taylor} and Theorem~\ref{thm:ans}. We summarize these bounds      
in the following theorem.     
     
\begin{thm}\label{thm:ans2}      
For an arbitrary integer $k$, we have     
\begin{itemize}     
\item     
for $n\in[1,s]$,     
$$     
e(A_n^{\rm Tay-Per})\le \frac1{2^{n-1}\,n!},     
$$     
\item     
for $n\ge 2s$      
$$     
e(A_n^{\rm Tay-Per})\le       
\left(\frac3{2\pi}\right)^s\,\frac{2}{(n-2s+1+|k|)^s}.     
$$     
\end{itemize}     
\end{thm}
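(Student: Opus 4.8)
The plan is to read off both bounds directly from the two defining cases of $A_n^{\rm Tay-Per}$ together with estimates already established; no new analysis is really needed, only careful matching of the edge cases to the single bound stated. First I would handle $n\in[1,s]$. Here, by definition, $A_n^{\rm Tay-Per}(f)=I_k(T_{f,n})$, so that $I_k(f)-A_n^{\rm Tay-Per}(f)=I_k(f-T_{f,n})$, and the pointwise estimate~\eqref{Taylor} gives
$$
|I_k(f)-A_n^{\rm Tay-Per}(f)|\;\le\;\frac1{2^{n-1}\,n!}\,\|f\|_{H^s}\qquad\text{for all }f\in H^s.
$$
Taking the supremum over $\|f\|_{H^s}\le1$ yields the first bound, and since~\eqref{Taylor} holds for every integer $k$, this part needs no case distinction on $k$.

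Next I would treat $n\ge2s$, writing $n=2s+\ell$ with $\ell\in\N_0$, where $A_n^{\rm Tay-Per}=A_n^{\rm Per}$ by definition. For $k\neq0$ the claimed bound is exactly the second bullet of Theorem~\ref{thm:ans} when $n>2s$; for the boundary value $n=2s$ (which is even) the first bullet of Theorem~\ref{thm:ans} gives $e(A_{2s}^{\rm Per})\le(2\pi|k|)^{-s}$, and I would observe that $(1+|k|)^s\le2^s|k|^s\le2\cdot3^s|k|^s$ for all $|k|\ge1$, so that $(2\pi|k|)^{-s}\le(3/(2\pi))^s\,2\,(1+|k|)^{-s}$, which is exactly the stated bound since $n-2s+1+|k|=1+|k|$. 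This disposes of all $k\neq0$.

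The only point not literally covered by Theorem~\ref{thm:ans} is $k=0$, for which I would simply reproduce the short argument from that theorem's proof. For $k=0$ and $n=2s+\ell$ we have $A_n^{\rm Per}(f)=I_0(p_{f,s})+A^*_{\ell+1}(f-p_{f,s})$ with $I_0(p_{f,s})=0$ by the computation in Subsection~\ref{subsec:periodization}, hence $I_0(f)-A_n^{\rm Tay-Per}(f)=I_0(f-p_{f,s})-A^*_{\ell+1}(f-p_{f,s})$; since $f-p_{f,s}\in\wt H^s$, the estimate~\eqref{estanstar} of Corollary~\ref{coranstar} applied to $A^*_{\ell+1}$ with $|k|=0$, together with $\|f-p_{f,s}\|_{H^s}\le\|f\|_{H^s}$ from~\eqref{eq:parseval} (taking $j=s$), gives $|I_0(f-p_{f,s})-A^*_{\ell+1}(f-p_{f,s})|\le(3/(2\pi))^s\,2\,(\ell+1)^{-s}\,\|f\|_{H^s}$, and $\ell+1=n-2s+1=n-2s+1+|k|$ finishes the case after taking the supremum over the unit ball. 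Thus the real obstacle is not analytic but organizational: making the boundary case $n=2s$ and the case $k=0$ both fit the one clean inequality claimed in the theorem, everything else being a direct quotation of~\eqref{Taylor}, Theorem~\ref{thm:ans}, and Corollary~\ref{coranstar}.
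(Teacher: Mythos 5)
Your proposal is correct and takes essentially the same route as the paper, which obtains Theorem~\ref{thm:ans2} directly from the Taylor bound~\eqref{Taylor} and Theorem~\ref{thm:ans} without further elaboration. Your additional care with the boundary case $n=2s$ (via the first bullet of Theorem~\ref{thm:ans} and a crude comparison) and with $k=0$ (rerunning the periodization estimate through~\eqref{estanstar} and~\eqref{eq:parseval}) simply fills in details the paper leaves implicit.
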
     
     
We now comment on Theorems~\ref{thm:ans} and~\ref{thm:ans2} for a finite $s$.     
For $k=0$ and initial $n$, i.e., even $n\le2s$ or $n\le s$,      
we can only apply Theorem~\ref{thm:ans2}. It tells us that for  $n\in[1,s]$     
the error bound of $A_n^{\rm Tay-Per}$ is exponentially small in $n$.      
Note that for non-zero $k$     
we can use both theorems. For the initial $n$ and $|k|$ small relative      
to $n$, the first bound     
of Theorem~\ref{thm:ans2} is smaller than the first bound in Theorem~\ref{thm:ans}.     
On the other hand, for large $|k|$ relative to~$n$, the opposite is true.      
Obviously for $n>2s$, both theorems coincide and the error bound of $A_n^{\rm Tay-Per}=A_n^{\rm Per}$     
is of the form     
\begin{equation}\label{csexp}     
e(A_n^{\rm Per})\,\le\, \left(\frac3{2\pi}\right)^s \frac{2}{(n-2s+1+|k|)^s}.     
\end{equation}     
     
The last bound yields an upper bound on the $n$th minimal error      
$e(n,k,s)$ for $n\ge2s$.      
Combining this with~\eqref{lownonper} and Theorem~\ref{thm:int1}     
we obtain sharp lower and upper bounds on the minimal errors     
$e(n,k,s)$.      
\begin{thm} \label{thm:int2}        
Consider the integration problem $I_k$ defined over the        
space $H^s$ of non-periodic functions with $s \in \nat$.        
Then       
$$        
\frac{c_s}{(n+|k|)^s} \;\le\; e(n,k,s) \;\le\;        
\left(\frac3{2\pi}\right)^s \frac{2}{(n+|k|-2s+1)^s} ,        
$$        
for all $k\in\Z$ and $n\ge 2s$. The positive number $c_s$ is     
from Theorem~\ref{thm:int1}.       
\qed      
\end{thm}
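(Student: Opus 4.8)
The plan is to assemble this statement from facts already established, with essentially no new analysis required: the upper bound will come from the periodization-based algorithm of Section~\ref{subsec:algorithms}, and the lower bound will be inherited verbatim from the periodic case through the (norm-preserving) inclusion $\wt H^s\subset H^s$.

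\textbf{Upper bound.} Since $e(n,k,s)=\inf_{A_n}e(A_n)$, it suffices to exhibit a single admissible algorithm using at most $n$ function or derivative (up to order $s-1$) values whose worst case error attains the claimed bound. For $n\ge 2s$ I would take $A_n^{\rm Tay-Per}$, which in this range ($n=2s+\ell$, $\ell\in\N_0$) coincides with $A_n^{\rm Per}$. The second bullet of Theorem~\ref{thm:ans2} (equivalently~\eqref{csexp}), valid for every integer $k$, gives
$$
e(n,k,s)\;\le\;e(A_n^{\rm Tay-Per})\;\le\;\left(\frac3{2\pi}\right)^s\frac{2}{(n-2s+1+|k|)^s},
$$
and $n-2s+1+|k|=n+|k|-2s+1$ is positive for $n\ge 2s$, so this is exactly the asserted bound.

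\textbf{Lower bound.} Here I would invoke~\eqref{lownonper}: since $\wt H^s\subset H^s$ and the $H^s$-norm of a periodic function agrees with its norm as an element of $\wt H^s$, the unit ball of $\wt H^s$ sits inside the unit ball of $H^s$; hence for every $A_n$ the supremum defining $e(A_n)$ dominates the one defining $\tilde e(A_n)$, so $e(n,k,s)\ge\tilde e(n,k,s)$. Theorem~\ref{thm:int1} then supplies $\tilde e(n,k,s)\ge c_s/(n+|k|)^s$ with the very constant $c_s$ already named in the statement, which completes the proof.

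I do not anticipate any genuine obstacle: the two nontrivial ingredients — the fooling-function construction behind the lower bound and the control of the periodization error behind the upper bound — were carried out in Theorems~\ref{thm:int1} and~\ref{thm:ans}/\ref{thm:ans2}. The only remarks worth making explicit are that periodization inflates the effective number of samples by at most $2s-1$ (this is the sole source of the $-2s+1$ shift, harmless as soon as $n\ge 2s$), and that the constant $c_s$ transfers unchanged from the periodic to the non-periodic setting precisely because the embedding $\wt H^s\hookrightarrow H^s$ is isometric on $\wt H^s$.
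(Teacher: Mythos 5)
Your proposal matches the paper's proof exactly: the upper bound is read off from the second bullet of Theorem~\ref{thm:ans2} (equivalently~\eqref{csexp}) applied to $A_n^{\rm Per}$, and the lower bound is inherited from Theorem~\ref{thm:int1} via~\eqref{lownonper}. Nothing to add.
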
         
     
We stress that Theorem~\ref{thm:int2} presents      
an upper bound on the minimal       
errors $e(n,k,s)$ only for $n\ge2s$,     
the lower bound holds for all $n$.      
The reason is that we need $2s$       
function and derivatives values to periodize the function $f$ which       
enables us to use the algorithm $A_n^*$. We do not know sharp bounds       
on $e(n,k,s)$ for $n\in[1,2s)$. However, we know that      
$e(n,k,s)$ is at most $1/(2^{n-1}n!)$ for $n\le s$ and all $k$, see Theorem~\ref{thm:ans2},     
and at most  $(2\pi|k|)^{-n/2}$ for $n\le 2s$ and all $k\not=0$,      
see Theorem~\ref{thm:ans}.     
Of course, the problem of the minimal errors $e(n,k,s)$ for initial $n$ it is not very important as       
long as $s$ is not too large.       
       
The minimal errors $e(n,k,s)$ for the non-periodic case have a       
peculiar property for $s\ge2$ and large $k$. Namely,        
for $n=0$ we obtain the initial error which is of order $|k|^{-1}$,       
whereas for $n\ge 2s$ it becomes of order $|k|^{-s}$. Hence, the       
dependence on $|k|^{-1}$ is short-lived and disappears quite quickly.       
For instance, take $s=2$. Then $e(n,k,s)$ is of order $|k|^{-1}$        
only for $n=0$ and maybe for $n=1,2,3$, and then becomes of order $|k|^{-2}$.        
     
\vskip 1pc       
We now briefly discuss the absolute and normalized error       
criteria for the non-periodic case.        
For the \emph{absolute error criterion}, the information        
complexity~$n^{\rm abs}(\e,k,s)$ for $\e\in(0,1)$ is defined as        
$$        
n^{\rm abs}(\e,k,s)        
=\min\left\{\,n\ |\ \ e(n,k,s)\le \e\,\right\}.        
$$        
Clearly, $n^{\rm abs}(\e,k,s)=0$         
for $\e\ge  e(0,k,s)$.  For $\e< e(0,k,s)$        
we can bound $n^{\rm abs}(\e,k,s)$  by Theorem~\ref{thm:int2}.       
This implies the following corollary.        
       
\begin{cor}        
Consider the absolute error criterion for the integration problem       
$I_k$ defined over the space $H^s$.        
Let $c_s$ be from Theorem~\ref{thm:int1}.       
\begin{itemize}        
\item For $k=0$ and all $\e\in(0,1)$ we have         
$$        
c_s^{1/s}\,\left(\frac1{\e}\right)^{1/s}        
\,\le\, n^{\rm abs}(\e,0,s) \,\le\,        
\left\lceil     
\left(\frac3{2\pi}\right)\left(\frac{2}{\e}\right)^{1/s}\right\rceil +2s-1.       
$$        
     
\item For $k\not=0$ and $\e\in[\beta_{k,s}/(2\pi|k|),1)$,       
with $\beta_{k,s}$ from Proposition~\ref{prop:non_initial},         
we have        
$$        
n^{\rm abs}(\e,k,s)=0,        
$$        
whereas for $\e\in(0,\beta_{k,s}/(2\pi|k|))$ we have        
$$        
c_s^{1/s}\,\left(\frac1{\e}\right)^{1/s}\,-\,|k|        
\,\le\, n^{\rm abs}(\e,k,s) \,\le\,      
2s \,+\, \max\left\{0,      
\left\lceil \left(\frac3{2\pi}\right)\left(\frac{2}{\e}\right)^{1/s}\right\rceil     
 -1-|k| \right\}.        
$$        
\end{itemize}        
\qed       
\end{cor}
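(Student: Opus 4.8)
The plan is to read both inequalities off the error bounds already established: the lower bound $e(n,k,s)\ge c_s/(n+|k|)^s$, which holds for \emph{all} $n$ by Theorem~\ref{thm:int1} together with~\eqref{lownonper}; the upper bound $e(n,k,s)\le (3/(2\pi))^s\,2/(n+|k|-2s+1)^s$, valid for $n\ge 2s$ by Theorem~\ref{thm:int2}; and the explicit value of the initial error $e(0,k,s)$ from Proposition~\ref{prop:non_initial}. Everything then reduces to bookkeeping with $n^{\rm abs}(\e,k,s)=\min\{n:e(n,k,s)\le\e\}$.

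First I would settle when $n^{\rm abs}(\e,k,s)=0$, which by definition is exactly when $e(0,k,s)\le\e$. For $k=0$ we have $e(0,0,s)=1$, so $n^{\rm abs}(\e,0,s)\ge1$ for every $\e\in(0,1)$; for $k\ne0$, Proposition~\ref{prop:non_initial} gives $e(0,k,s)=\beta_{k,s}/(2\pi|k|)$, so $n^{\rm abs}(\e,k,s)=0$ precisely for $\e\in[\beta_{k,s}/(2\pi|k|),1)$, which is the first assertion in the case $k\ne0$. For the lower bounds I would observe that if an integer $n\ge0$ satisfies $e(n,k,s)\le\e$, then $c_s/(n+|k|)^s\le\e$, hence $n+|k|\ge(c_s/\e)^{1/s}$, that is, $n\ge c_s^{1/s}\e^{-1/s}-|k|$. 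Taking $n=n^{\rm abs}(\e,k,s)$ gives the claimed lower bounds, with $|k|=0$ in the case $k=0$.

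For the upper bounds I would exhibit an explicit admissible $n$. Put $N=\lceil (3/(2\pi))(2/\e)^{1/s}\rceil$ (so $N\ge1$) and
\[
n \;=\; \max\{\,2s,\; N+2s-1-|k|\,\}\;=\;2s+\max\{0,\;N-1-|k|\}.
\]
Since $n\ge2s$, Theorem~\ref{thm:int2} gives $e(n,k,s)\le (3/(2\pi))^s\,2/(n+|k|-2s+1)^s$. If $N-1-|k|>0$, then $n+|k|-2s+1=N\ge(3/(2\pi))(2/\e)^{1/s}$, whence $N^s\ge(3/(2\pi))^s(2/\e)$ and one line of arithmetic yields $e(n,k,s)\le\e$. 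If $N-1-|k|\le0$, then $n=2s$ and $N\le|k|+1$; since $|k|+1$ is an integer, $\lceil(3/(2\pi))(2/\e)^{1/s}\rceil\le|k|+1$ forces $\e\ge 2(3/(2\pi))^s/(|k|+1)^s$, and Theorem~\ref{thm:int2} with $n=2s$ gives exactly $e(2s,k,s)\le 2(3/(2\pi))^s/(|k|+1)^s\le\e$. In either case $e(n,k,s)\le\e$, so $n^{\rm abs}(\e,k,s)\le n$, which is the asserted upper bound; for $k=0$ the inner $\max$ equals $N-1$, reproducing the stated form $\lceil(3/(2\pi))(2/\e)^{1/s}\rceil+2s-1$.

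Since this corollary is essentially a translation of Theorems~\ref{thm:int1} and~\ref{thm:int2} into the language of information complexity, no serious obstacle is expected; the only point requiring genuine care is that the upper bound of Theorem~\ref{thm:int2} is available only for $n\ge2s$, so one must verify that the choice $n=2s$ already suffices in the range of $\e$ where $N-1-|k|\le0$. This works out precisely because the value of that upper bound at $n=2s$ equals $2(3/(2\pi))^s/(|k|+1)^s$, which is exactly the threshold on $\e$ that one obtains by unwinding the ceiling; matching these two quantities is the heart of the argument.
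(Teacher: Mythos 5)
Your proposal is correct and follows essentially the same route as the paper, which states the corollary as an immediate consequence of the lower bound in Theorem~\ref{thm:int1} (via \eqref{lownonper}), the upper bound of Theorem~\ref{thm:int2} for $n\ge 2s$, and the initial error from Proposition~\ref{prop:non_initial}. Your explicit check that the choice $n=2s$ already achieves error at most $\e$ when $\lceil (3/(2\pi))(2/\e)^{1/s}\rceil\le |k|+1$ is exactly the bookkeeping the paper leaves to the reader, and it is carried out correctly.
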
        
     
Similarly as for the periodic case, this means that        
for the absolute error criterion the problem for the non-periodic case       
becomes easier for large $|k|$. However, for $k\not=0$,        
the condition on $\e$ is now quite different for $s\ge2$       
as compared to the periodic case, see Corollary~\ref{coro:per_abs_n}.       
We also stress that the asymptotic behaviors of        
$\tilde n^{\rm abs}(\e,k,s)$ and $n^{\rm abs}(\e,k,s)$        
are of order~$\e^{-1/s}$ and do not depend on $k$.       
       
We now turn to the \emph{normalized error criterion} for which        
the information complexity  $n^{\rm nor}(\e,k,s)$ for $\e\in(0,1)$        
is defined as         
$$        
\tilde n^{\rm nor}(\e,k,s)        
=\min\left\{\,n\ |\ \ e(n,k,s)\le \e\,e(0,k,s)\,\right\}.        
$$        
We always have $n^{\rm nor}(\e,k,s)\ge1$. For $k=0$        
we have $e(0,0,s)=1$ and there is no difference        
between the normalized and absolute error criteria also for       
the non-periodic case.        
        
For $k\not=0$ the situation is quite different. From        
Theorem~\ref{thm:int2}, Proposition~\ref{prop1}     
as well as the estimates of $\beta_{k,s}$,         
it is easy to obtain the following corollary.        
       
\begin{cor}\label{cor157}        
Consider the normalized error criterion for the integration problem       
$I_k$ defined over the space $H^s$.        
Let $c_s$ be from Theorem~\ref{thm:int1}.      
       
For all $k\not=0$ and all $\e\in(0,1)$ we have        
$$        
c_s^{1/s}\,\left(\frac{\sqrt{2}\pi|k|}{\e}\right)^{1/s}-|k|       
\le n^{\rm nor}(\e,k,s) \le \;       
 2s + \max\left\{0,      
        \left\lceil \left(\frac3{2\pi}\right)\left(\frac{4\pi|k|}{\e}\right)^{1/s}\right\rceil     
        -1-|k|\right\},       
$$       
which can be written as     
\begin{equation}\label{eq:compl_nor_non-per}      
n^{\rm nor}(\e,k,s)=\Theta\left(\frac{|k|^{1/s}}{\e^{1/s}}\right)       
\ \ \ \ \ \mbox{as}\ \ \ \ \ \e\to0.       
\end{equation}      
\end{cor}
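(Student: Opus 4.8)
The plan is to eliminate $n$ from the matching bounds on $e(n,k,s)$ in Theorem~\ref{thm:int2}, combining them with the exact value $e(0,k,s)=\beta_{k,s}/(2\pi|k|)$ of the initial error from Proposition~\ref{prop:non_initial} and the crude estimates $1\le\beta_{k,s}\le\sqrt2$ recorded there; these hold for every $s\ge1$, since $\beta_{k,1}=\sqrt2$ and $\beta_{k,s}\le1.02566<\sqrt2$ for $s>1$.

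For the lower bound I would set $n=n^{\rm nor}(\e,k,s)$. By definition $e(n,k,s)\le\e\,e(0,k,s)\le\sqrt2\,\e/(2\pi|k|)$, while the left inequality of Theorem~\ref{thm:int2} (which holds for all $n$) gives $e(n,k,s)\ge c_s/(n+|k|)^s$. Chaining these and solving for $n$ yields $(n+|k|)^s\ge\sqrt2\,\pi c_s|k|/\e$, hence $n\ge c_s^{1/s}\bigl(\sqrt2\,\pi|k|/\e\bigr)^{1/s}-|k|$.

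For the upper bound I would exhibit one admissible $n$. Since $e(0,k,s)\ge1/(2\pi|k|)$, it suffices to find $n$ with $e(n,k,s)\le\e/(2\pi|k|)$. For $n\ge2s$ the right inequality of Theorem~\ref{thm:int2} reduces this requirement to $(3/(2\pi))^s\cdot2\,(n+|k|-2s+1)^{-s}\le\e/(2\pi|k|)$, which after rearranging holds as soon as $n\ge(3/(2\pi))(4\pi|k|/\e)^{1/s}+2s-1-|k|$. Hence the choice $n=2s+\max\bigl\{0,\lceil(3/(2\pi))(4\pi|k|/\e)^{1/s}\rceil-1-|k|\bigr\}$ works, once one also checks that in the regime where this maximum vanishes the value $n=2s$ is itself admissible. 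That check is a one-line computation: the condition of this regime, $(3/(2\pi))(4\pi|k|/\e)^{1/s}\le\lceil(3/(2\pi))(4\pi|k|/\e)^{1/s}\rceil\le1+|k|$, raised to the $s$th power becomes $4\pi|k|\,3^s\le\e\,(2\pi)^s(1+|k|)^s$, which is precisely what is needed for $(3/(2\pi))^s\cdot2\,(|k|+1)^{-s}\le\e/(2\pi|k|)$, i.e.\ for $e(2s,k,s)\le\e\,e(0,k,s)$.

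Finally, the displayed $\Theta$-relation follows at once: with $k$ and $s$ fixed, both the lower and the upper bound grow, as $\e\to0$, like a constant depending only on $s$ times $|k|^{1/s}\e^{-1/s}$, the additive terms $-|k|$, $2s$ and the ceiling contributing only lower-order amounts. I do not anticipate a real obstacle; the only point that needs a little care is the boundary case $n=2s$ in the upper bound, everything else being routine elimination of $n$ from the sandwich bounds of Theorem~\ref{thm:int2} and the elementary estimates of $\beta_{k,s}$.
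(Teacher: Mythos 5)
Your proof is correct and follows exactly the intended route: eliminate $n$ from the sandwich in Theorem~\ref{thm:int2} using the initial error $e(0,k,s)=\beta_{k,s}/(2\pi|k|)$ with $1\le\beta_{k,s}\le\sqrt2$ from Proposition~\ref{prop:non_initial}. The explicit check that $n=2s$ is admissible when the maximum is zero is a welcome detail, and the asymptotic $\Theta$ statement follows immediately as you say; note that the paper's citation of Proposition~\ref{prop1} in its preamble is evidently a slip for Proposition~\ref{prop:non_initial}, which you correctly use.
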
        
The asymptotic expression \eqref{eq:compl_nor_non-per} shows       
that for the normalized error criterion the problem becomes harder for      
large $|k|$ and small $\eps$.       
The dependence on $k$ is through $|k|^{1/s}$        
and decreases with $s$. This should be compared with the periodic case,       
where the dependence on $|k|$ is linear.       
Hence, the dependence on $k$ is the same for $s=1$,       
and the periodic case is harder than the non-periodic       
case for $s\ge2$ and small $\eps$.      
      
For fixed $\eps$ and varying $|k|$, the difference in the behavior of      
the information complexity in $|k|$ is even more dramatic and depends on $s$.      
Consider first $s=1$.      
Then Corollary~\ref{cor157} yields for $\e<\sqrt{2}\pi c_s$ that     
$$      
\lim_{|k|\to\infty}n^{\rm nor}(\e,k,s)=\infty,      
$$      
as for the periodic case, see~\eqref{pernor}.      
      
Assume now that $s\ge2$.       
In this case, the information complexity for the non-periodic problem       
does not go to infinity with $|k|$ in contrast to the periodic case,      
see again~\eqref{pernor}. This simply follows from       
Corollary~\ref{cor157} since the second term of the maximum behaves      
like $\mathcal{O}(|k|^{1/s})-|k|$ and goes to $-\infty$. Hence      
\begin{equation}\label{cor158}      
\limsup_{|k|\to\infty}\, n^{\rm nor}(\e,k,s)\le 2s.      
\end{equation}      
This is even true if we choose $\eps$ slowly decreasing with $|k|$,       
say $\eps_k=|k|^{-\eta}$ for some~$\eta\in(0,s-1)$. Indeed, then       
$|k|/\e_k=|k|^{1+\eta}$ and $\mathcal{O}(|k|^{(1+\eta)/s})-|k|$      
still goes to $-\infty$ and~\eqref{cor158} is again valid.        
This discussion can be summarized as follows.      
\begin{cor}      
For the non-periodic case and the normalized error criterion        
\begin{itemize}      
\item      
for $s\ge1$,       
oscillatory integration becomes       
harder in $|k|$ asymptotically in $\e$,      
\item      
for $s=1$ and fixed small $\e$,       
oscillatory integration becomes       
harder in $|k|$,      
\item       
for $s\ge2$ and fixed $\e$ or      
even for $\e^{-1}=\mathcal{O}(|k|^\eta)$ with $\eta\in(0,s-1)$,       
oscillatory integration becomes easy since      
$n^{\rm nor}(\e,k,s)$ is at most $2s$ for large $|k|$.      
\end{itemize}      
\end{cor}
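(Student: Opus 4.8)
The plan is to deduce all three assertions directly from Corollary~\ref{cor157}, which for $k\neq0$ and $\e\in(0,1)$ sandwiches the complexity as
\[
c_s^{1/s}\left(\frac{\sqrt{2}\,\pi|k|}{\e}\right)^{1/s}-|k|
\;\le\; n^{\rm nor}(\e,k,s)
\;\le\; 2s+\max\left\{0,\left\lceil\frac{3}{2\pi}\left(\frac{4\pi|k|}{\e}\right)^{1/s}\right\rceil-1-|k|\right\}.
\]
The only structural fact I would need besides this inequality is that $c_s$ (and hence both constants) depends only on $s$ and not on $k$, which is immediate from Theorems~\ref{thm:int1} and~\ref{thm:int2}. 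Granting that, each bullet reduces to inspecting how the two sides behave in the indicated regime, and no fresh estimate is required.

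For the first bullet I would simply quote the asymptotic form~\eqref{eq:compl_nor_non-per}, namely $n^{\rm nor}(\e,k,s)=\Theta(|k|^{1/s}\e^{-1/s})$ as $\e\to0$ with $\Theta$-constants depending only on $s$; since $|k|^{1/s}$ is strictly increasing in $|k|$, the leading term of the complexity increases with $|k|$, which is the meaning of ``harder in $|k|$ asymptotically in $\e$''. For the second bullet I would specialize the lower bound to $s=1$, obtaining $n^{\rm nor}(\e,k,1)\ge|k|\bigl(\sqrt{2}\,\pi c_1/\e-1\bigr)$; for any fixed $\e<\sqrt{2}\,\pi c_1$ the right-hand side tends to $+\infty$ as $|k|\to\infty$, so $\lim_{|k|\to\infty}n^{\rm nor}(\e,k,1)=\infty$, matching the periodic behaviour~\eqref{pernor}.

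For the third bullet I would work from the upper bound. Its second entry has the shape $C(\e,s)\,|k|^{1/s}-|k|-1$ with $C(\e,s)=\frac{3}{2\pi}(4\pi/\e)^{1/s}$; for $s\ge2$ the exponent $1/s\le\frac12$ is strictly below $1$, so for fixed $\e$ this quantity tends to $-\infty$ as $|k|\to\infty$, the maximum collapses to $0$ for all large $|k|$, and hence $n^{\rm nor}(\e,k,s)\le2s$, i.e.\ $\limsup_{|k|\to\infty}n^{\rm nor}(\e,k,s)\le2s$. For the robustness claim I would set $\e=\e_k=|k|^{-\eta}$, substitute $|k|/\e_k=|k|^{1+\eta}$, and note that the subtracted term now behaves like $|k|^{(1+\eta)/s}-|k|$; the hypothesis $\eta\in(0,s-1)$ is exactly the inequality $(1+\eta)/s<1$, which again forces this term to $-\infty$ and so $n^{\rm nor}(\e_k,k,s)\le2s$ for large $|k|$. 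I do not expect any genuine obstacle, since the corollary is essentially a restatement of facts already established; the only points to watch are the exponent inequality $(1+\eta)/s<1\Leftrightarrow\eta<s-1$ in this last case, and the $k$-independence of the constants, which is what makes the $|k|\to\infty$ statements meaningful.
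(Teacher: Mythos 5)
Your proposal is correct and follows essentially the same route as the paper: the first bullet is the asymptotic statement \eqref{eq:compl_nor_non-per}, the second is the lower bound of Corollary~\ref{cor157} specialized to $s=1$ with $\e<\sqrt{2}\pi c_s$, and the third is the observation that the second entry of the maximum in the upper bound behaves like $\mathcal{O}(|k|^{(1+\eta)/s})-|k|\to-\infty$ precisely when $(1+\eta)/s<1$, i.e.\ $\eta<s-1$, which is exactly the paper's argument leading to \eqref{cor158}. No gaps.
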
      
        
\section{The case of $s=\infty$}       
We briefly discuss the space $H^\infty$ which is defined as      
$$     
H^\infty=\{f\in C^\infty([0,1])\,| \ \      
\sum_{\ell=0}^\infty \|f^{(\ell)}\|_{L_2}^2<\infty\,\},     
$$     
where $\|f^{(\ell)}\|_{L_2}$  
denotes the     
$L_2=L_2([0,1])$ norm of      
$f^{(\ell)}$.  Note that $H^\infty$ consists of infinitely      
many times differentiable functions.     
In particular, all polynomials belong to $H^\infty$     
but $e_h(x)=\exp(2\pi ihx)$ belongs to $H^\infty$ iff $h=0$.      
     
We equip the space $H^\infty$ with the two inner products     
\begin{eqnarray*}     
\il f,g\ir_{\infty}&=&\sum_{\ell=0}^{\infty} \il f^{(\ell)},1\ir_0      
\widebar{\il g^{(\ell)},1\ir_0},\\     
\il f,g\ir_{\infty,*}&=&\sum_{\ell=0}^{\infty} \il     
f^{(\ell)},g^{(\ell)}\ir_0.     
\end{eqnarray*}     
As for a finite $s$, the norms generated by theses inner products are     
closely related since we have     
$$     
\tfrac{12}{13}\,\|f\|_{H^\infty_*}\le \|f\|_{H^\infty}\le \|f\|_{H^\infty_*}    
\ \ \ \ \mbox{for all}\ \ \ f\in H^\infty,     
$$     
see the appendix. This means that it is enough to consider only one of     
these inner product and, as before, we choose $\il     
\cdot,\cdot\ir_\infty$  for simplicity of the analysis.     
    
\begin{prop}\label{dense1}     
Polynomials are dense in $H^\infty$, i.e.,      
for any $f\in H^\infty$ and any positive $\e$      
there is a polynomial $p$ such that     
$$     
\|f-p\|_{H^\infty}\le \e.     
$$     
\end{prop}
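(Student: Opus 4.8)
The plan is to show that the Taylor polynomials of $f$ about the midpoint $\tfrac12$, already used in Section~\ref{subsec:taylor}, converge to $f$ in the $H^\infty$ norm. Fix $f\in H^\infty$ and, for $N\in\N$, let $p_N=T_{f,N}$ be its Taylor polynomial of degree $N-1$ at $\tfrac12$; this is well defined since $f\in C^\infty([0,1])$, and every polynomial lies in $H^\infty$. For $\ell<N$ the derivative $p_N^{(\ell)}$ is exactly the degree $N-1-\ell$ Taylor polynomial of $f^{(\ell)}$ at $\tfrac12$, so with $m:=N-\ell$ the integral form of the Taylor remainder gives
\[
(f-p_N)^{(\ell)}(x)=\frac1{(m-1)!}\int_{1/2}^x(x-u)^{m-1}f^{(N)}(u)\,{\rm d}u,
\]
whereas for $\ell\ge N$ one simply has $(f-p_N)^{(\ell)}=f^{(\ell)}$.

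First I would bound the terms of $\|f-p_N\|_{H^\infty}^2=\sum_{\ell\ge0}\bigl|\il (f-p_N)^{(\ell)},1\ir_0\bigr|^2$ coming from $\ell<N$. Since $|\il g,1\ir_0|\le\|g\|_{L_2}$, it suffices to bound $\|(f-p_N)^{(\ell)}\|_{L_2}$. Applying Cauchy--Schwarz to the remainder integral gives, for $x\in[0,1]$ and $m=N-\ell$,
\[
\bigl|(f-p_N)^{(\ell)}(x)\bigr|^2\le\frac1{((m-1)!)^2}\cdot\frac{|x-\tfrac12|^{2m-1}}{2m-1}\cdot\|f^{(N)}\|_{L_2}^2 ,
\]
so, using $|x-\tfrac12|\le\tfrac12$, integrating over $x\in[0,1]$ and summing over $\ell=0,\dots,N-1$ (equivalently $m=1,\dots,N$) yields
\[
\sum_{\ell=0}^{N-1}\bigl|\il (f-p_N)^{(\ell)},1\ir_0\bigr|^2\le C\,\|f^{(N)}\|_{L_2}^2,\qquad
C:=\sum_{m=1}^\infty\frac{(1/2)^{2m-1}}{((m-1)!)^2(2m-1)}<\infty ,
\]
the series for $C$ converging by the ratio test. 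The remaining terms are trivial: $\sum_{\ell\ge N}\bigl|\il (f-p_N)^{(\ell)},1\ir_0\bigr|^2=\sum_{\ell\ge N}\bigl|\il f^{(\ell)},1\ir_0\bigr|^2\le\sum_{\ell\ge N}\|f^{(\ell)}\|_{L_2}^2$.

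Combining the two estimates gives
\[
\|f-p_N\|_{H^\infty}^2\le C\,\|f^{(N)}\|_{L_2}^2+\sum_{\ell\ge N}\|f^{(\ell)}\|_{L_2}^2 .
\]
Because $f\in H^\infty$ means $\sum_{\ell\ge0}\|f^{(\ell)}\|_{L_2}^2<\infty$, the right-hand side is a single term of a convergent series plus a tail of it, hence tends to $0$ as $N\to\infty$; given $\e>0$ it then suffices to take $N$ so large that this quantity is at most $\e^2$ and set $p=p_N$. (One could equally run the argument in the norm $\|\cdot\|_{H^\infty_*}$ and invoke the equivalence $\tfrac{12}{13}\|\cdot\|_{H^\infty_*}\le\|\cdot\|_{H^\infty}\le\|\cdot\|_{H^\infty_*}$ stated above.) The only step needing care — and the reason centering the expansion at $\tfrac12$ is convenient — is arranging that the contributions of \emph{all} remainders of order below $N$ are jointly dominated by the single vanishing quantity $\|f^{(N)}\|_{L_2}^2$ times a \emph{finite} constant, rather than by something that grows with $N$.
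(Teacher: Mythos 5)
Your proof is correct and takes essentially the same route as the paper: both approximate $f$ by its Taylor polynomial at $\tfrac12$ of degree chosen so that the tail $\sum_{\ell\ge N}\|f^{(\ell)}\|_{L_2}^2$ is small, and both control the low-order derivatives of the remainder by $\|f^{(N)}\|_{L_2}$ times constants that are summable in $\ell$. The only difference is the technical device: the paper iterates the elementary inequality $\|g-g(\tfrac12)\|_{L_2}\le\tfrac12\|g'\|_{L_2}$ to obtain geometric factors $2^{\ell-N}$, whereas you invoke the integral form of the Taylor remainder plus Cauchy--Schwarz to get (even stronger) factorial decay; either estimate suffices.
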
     
\begin{proof}     
We begin by showing that for an absolutely      
continuous function $g$ for which $g^{\,\prime}\in L_2([0,1])$     
we have       
\begin{equation}\label{dense2}     
\|g-g(\tfrac12)\|_{L_2}\le \tfrac12\,\|g^{\,\prime}\|_{L_2}.     
\end{equation}     
Indeed, $g(x)-g(\tfrac12)=\int_{1/2}^xg^{\,\prime}(t)\,{\rm d}t$ and     
$$     
|g(x)-g(\tfrac12)|\le \int_{1/2}^x|g^{\,\prime}(t)|     
\,{\rm d}t 
\le \left|\int_{1/2}^x{\rm d}t\right|^{1/2}\, 
\left|\int_{1/2}^x|g^{\,\prime}(t)|^2\,{\rm d}t\right|^{1/2}     
\le|x-\tfrac12|^{1/2}\,\|g^{\,\prime}\|_{L_2}.      
$$     
Hence,     
$$     
\|g-g(\tfrac12)\|_{L_2}     
\le \left(\int_0^1|x-\tfrac12|\,{\rm d}x\right)^{1/2}\,\|g^{\,\prime}\|_{L_2}
= \tfrac12\,\|g^{\,\prime}\|_{L_2},     
$$     
as claimed.      
     
Take now an arbitrary $f\in H^\infty$. For any positive $\delta$      
there exists $\ell^*=\ell^*(f,\delta)\in \natural$ such that     
$$     
\sum_{\ell=\ell^*}^\infty\|f^{(\ell)}\|^2_{L_2}\le \delta^2.     
$$     
In particular, $\|f^{(\ell^*)}\|_{L_2}\le \delta$.      
Taking $g^{\,\prime}=f^{(\ell^*)}$     
we conclude from~\eqref{dense2} that       
$$     
\|f^{(\ell^*-1)}-f^{(\ell^*-1)}(\tfrac12)\|_{L_2}\le \tfrac12\,\delta.     
$$     
For $\ell^*\ge2$, 
we take $g^{\,\prime}=f^{(\ell^*-1)}-f^{(\ell^*-1)}(\tfrac12)$ 
and we have again     
from~\eqref{dense2}     
$$     
\|f^{(\ell^*-2)}-f^{(\ell^*-1)}(\tfrac12)(\cdot-\tfrac12)-f^{(\ell^*-2)}(\tfrac12) \|_{L_2}     
\le \tfrac14\,\delta.     
$$     
Repeating this procedure we conclude that for     
$$     
p(x)=f(\tfrac12)+f^{(\prime)}(\tfrac12)(x-\tfrac12)+\cdots      
+\frac{f^{(\ell^*-1)}(\tfrac12)}{(\ell^*-1)!}     
(x-\tfrac12)^{\ell^*-1}     
$$     
we have      
$$     
\|f^{(\ell)}-p^{(\ell)}\|_{L_2}\le 2^{\ell^-\ell^*}\,\delta\ \ \ \ \ \mbox{for all}\ \ \ \ \     
j=0,1,\dots,\ell^*-1.     
$$     
Hence,     
\begin{eqnarray*}     
\|f-p\|^2_{H^\infty}&\le&\sum_{\ell=0}^\infty\|f^{(\ell)}-p^{(\ell)}\|^2_{L_2}     
=\sum_{\ell=0}^{\ell^*-1}\|f^{(\ell)}-p^{(\ell)}\|^2_{L_2}\ +\      
\sum_{\ell=\ell^*}^\infty\|f^{(\ell)}-p^{(\ell)}\|^2_{L_2}\\     
&\le& \delta^2\,\sum_{\ell=0}^{\ell^*-1}4^{\ell-\ell^*}\ +\ \delta^2=\tfrac43\,\delta^2.     
\end{eqnarray*}     
Taking $\delta=\sqrt{3/4}\,\e$, Proposition~\ref{dense1} is proved.     
\end{proof}     
     
It is easy to see that that the periodic subspace      
$$     
\wt H^\infty=\{\,f\in H^\infty\,|\ f^{(\ell)}(0)=f^{(\ell)}(1)\ \      
\mbox{for}\  \ \ell \in \N_0  \}     
$$     
consists only of constant functions. Indeed, since      
$\|I_k\|_{\wt H^\infty\to\C}\le \|I_k\|_{\wt H^s\to\C}$      
for all $s\in\natural$ and for $k\not=0$ we have $\|I_k\|_{\wt H^s\to\C}=(2\pi|k|)^{-s}$,      
we conclude that $I_k=0$ for all $k\not=0$.     
This means that $f\in\wt H^\infty$ implies that $f=\mbox{constant}$, as claimed.      
It is also easy to check that the reproducing kernel of $\wt H^\infty$ is      
$\wt K_\infty(x,t)=1$.      
     
Let $\tilde e(n,k,\infty)$ be the minimal errors  for $\wt H^\infty$. Then     
$\tilde e(0,0,\infty)=1$ and $\tilde e(n,0,\infty)=0$ for all $n\ge1$, whereas      
$\tilde e(n,k,\infty)=0$ for all $n\ge0$ and $k\not=0$.     
     
This means that the periodic case is trivial and cannot be used     
as a tool for the non-periodic case. That is why our lower bound      
on $\tilde e(n,k,s)$ which was quite useful      
for a finite~$s$ is meaningless for $s=\infty$.     
In fact, the problem of non-trivial lower bounds for $H^\infty$ is open.       
     
Proposition~\ref{dense1} together with \eqref{eq:bernoulli2} shows that      
the set of normalized Bernoulli polynomials $\{B^*_j\}_{j=0,1,\dots}$ is a      
complete orthonormal basis of $H^\infty$ and therefore      
the reproducing kernel $K_\infty$ is given by     
$$     
K_\infty(x,t)=\sum_{j=0}^\infty B^*_j(x)\,B^*_j(t)\ \ \ \      
\mbox{for all}\ \ \ \ \ x,t\in [0,1].     
$$     
     
We now present some upper error bounds on     
the minimal errors  $e(n,k,\infty)$ for $H^\infty$.      
In fact, we derived      
the upper bounds in Theorems~\ref{thm:ans}     
and~\ref{thm:ans2} in such a way that they can be used even for $s=\infty$.
     
We start with the initial error. For $k=0$, the representer of $I_0$     
is $1$ and      
$$     
e(0,0,\infty)=1,     
$$     
whereas for $k\not=0$, the proof of      
Proposition~\ref{prop:non_initial} can be modified for $s=\infty$ and      
yields that the representer of $I_k$ is     
$$     
h_{k,\infty}(x)=-\sum_{\ell=1}^\infty(-1)^\ell\,(2\pi     
ik)^{-\ell}\,B^*_\ell(x)     
$$     
and      
$$     
e(0,k,\infty)=\frac{\beta_{k,\infty}}{2\pi|k|}      
$$     
with     
$$     
\beta_{k,\infty}=\left(\frac{4\pi^2 k^2}{4\pi^2 k^2-1}\right)^{1/2}\in[1,1.013].     
$$     
     
For $k=0$ and all $n\ge1$, we can apply the first error      
bound in Theorem~\ref{thm:ans2}     
which states that      
$$     
e(n,0,\infty)\le \frac1{2^{n-1}\,n!},     
$$       
which is super exponentially small in $n$.      
     
For $k\not=0$ and all even $n$, we apply the first error      
bounds in Theorems~\ref{thm:ans} and~\ref{thm:ans2}     
which state that      
$$     
e(n,k,\infty)\le \min\left(\frac1{2^{n-1}\,n!},\,     
\frac{1}{(2\pi |k|)^{n/2}}\right).     
$$     
Note that by Stirling's approximation we have      
\[     
\frac1{2^{n-1}\,n!} \;\le\; 2 \left(\frac{\eu}{2n}\right)^n.     
\]     
It is easy to check that the right hand side is smaller than $\eps$ iff     
\[     
n\bigl(\ln(2n)-1\bigr)\ge \ln(2/\eps)     
\]     
which holds, in particular, if $n\ge2\ln(\eps^{-1})/\ln\ln(\eps^{-1})$      
and $\eps<\eu^{-\eu}=0.135\dots$.     
     
These upper error bounds can be used to estimate the information complexities     
$n^{\rm abs}$ and $n^{\rm nor}$ for $\eps<\eu^{-\eu}$.      
For $k=0$, we have     
$$     
n^{\rm abs}(\e,0,\infty)=     
n^{\rm nor}(\e,0,\infty)     
\,\le\, \left\lceil2\,\frac{\ln\,\e^{-1}}{\ln\,\ln\,\e^{-1}}\right\rceil.     
$$     
For $k\not=0$, we have      
$$     
n^{\rm abs}(\e,k,\infty)\le \left\lceil     
2\,\min\left\{ \frac{\ln\,\e^{-1}}{\ln\,\ln\,\e^{-1}},\;     
\frac{\ln\,\e^{-1}}{\ln\,(2\pi|k|)}\right\} \right\rceil,     
$$     
and     
$$     
n^{\rm nor}(\e,k,\infty)\le      
\left\lceil     
2\,\min\left\{      
\frac{\ln\,\e^{-1}+\ln(2\pi|k|)}{\ln\left(\ln\,\e^{-1}+\ln(2\pi|k|)\right)},\;     
\frac{\ln\,\e^{-1}}{\ln\,(2\pi|k|)}+1\right\} \right\rceil.     
$$     
These estimates are valid for all $\eps<\eu^{-\eu}$. Note that asymptotically,      
when $\e$ tends to zero, all information complexity     
are upper bounded by roughly $\ln(\e^{-1})/\ln(\ln(\e^{-1}))$      
independent of~$k$.

\section{Appendix}     
We now prove~\eqref{equivalence} which shows     
the embeddings constants between the space $H^s$ equipped      
with the norm $\|\cdot\|_{H^s}$ given by~\eqref{eq:inner_product}      
and $\|\cdot\|_{H^s_*}$ given by~\eqref{newnorm1}.      
Since $|\il f^{(\ell)},1\ir_0|^2\le \|f^{(\ell)}\|^2_{L_2}$ we clearly     
have     
$\|f\|_{H^s}\le\|f\|_{H^s_*}$  for all  $f\in H^s$.     
     
To obtain the other estimate, we consider     
$f\in H^s$ which can     be written as     
$$     
f=\sum_{j=0}^s\il f,B^*_j\ir_sB^*_j\,+\,     
\sum_{h\in\Z\setminus{0}}     
\il f,e_h\ir_se_h     
$$     
since the normalized Bernoulli polynomials $B^*_j$ and      
$e^*_h=\exp(2\pi ih\cdot)/(2\pi|h|)^s$ are an orthonormal basis of $H^s$     
with respect to $\il\cdot,\cdot\ir_s$.     
Let $\{b_j\}_{j\in\N}$ be some ordering of this orthonormal basis.    
Then, clearly, we have     
\begin{equation} \label{eq:appendix_Ms}    
\begin{split}    
\|f\|_{H^s_*}^2 \,&=\, \norm{\sum_{j\in\N}\il f,b_j\ir_s b_j}_{H^s_*}^2     
\,=\, \sum_{j,m\in\N}\il f,b_j\ir_s     
\widebar{\il f,b_m\ir_s} \il b_j, b_m\ir_{s,*}\\    
&\le \,\sum_{j,m\in\N}\abs{\il f,b_j\ir_s}^2\abs{\il b_j,b_m\ir_{s,*}}\\    
&\le\, \left(\max_{m\in\N}\left\{\sum_{j\in\N}     
\abs{\il b_j, b_m\ir_{s,*}}\right\} \right)    
        \left(\sum_{j\in\N} \abs{\il f, b_j\ir_{s}}^2\right)\\    
&=\, \left(\max_{m\in\N}\left\{\sum_{j\in\N} 
\abs{\il b_j, b_m\ir_{s,*}}\right\} \right)    
        \|f\|_{H^s}^2\\    
&=:\, M_s\,\|f\|_{H^s}^2.    
\end{split}    
\end{equation}    
To bound $M_s$ we estimate $|\il b_j,b_m\ir_{s,*}|$ for     
all possible $b_j,    
b_m\in\{B^*_j,e^*_h\}_{j=0,1,\dots,s,\,h\in\Z\setminus\{0\}}$.    
    
We start with the case where both $b_j, b_m$     
are in $\{e_h^*\}_{h\in\Z\setminus0}$.     
This case is easy since $\{e_h^*\}_{h\in\Z}$ is also an orthogonal     
basis in $\wt H^s$ with the inner product $\il\cdot,\cdot\ir_{s,*}$.    
We have for $h\in\Z\setminus0$ and $\ell\neq h$ that    
$$     
\il e^*_h, e^*_\ell\ir_{s,*} =0 \quad \text{ and } \quad    
\|e^*_h\|_{H^s_*}=\frac1{(2\pi|h|)^s}\,     
\left(\sum_{\ell=0}^s(2\pi|h|)^{2\ell}\right)^{1/2}.     
$$     
Hence     
\begin{equation} \label{eq:appendix1}    
\abs{\il e^*_h, e^*_h\ir_{s,*}} \,=\, \|e^*_h\|_{H^s_*}^2 \,\le\,     
\frac1{1-(2\pi|h|)^{-2}}     
\,\le\, \frac{4\pi^2}{4\pi^2-1}.     
\end{equation}    
    
\bigskip    
    
To treat the case where $b_j, b_m$ are in $\{B^*_j\}_{j=0.1,\dots s}$,    
we need the following known properties of the normalized Bernoulli    
polynomials    
\begin{eqnarray*}     
\il B^*_m,B^*_0\ir_0&=& 1 \ \ \mbox{for $m=0$\  and \ $0$ for $m\ge1$},\\     
\il B_m^*,B^*_j\ir_0&=&(-1)^{\min\{m,j\}-1}\,B^*_{m+j}(0),     
\ \ \ \mbox{for all $m,j\ge1$}\\     
B^*_0(0)&=&1,\\     
B_{2m}^*(0)&=&\frac{2(-1)^{m+1}}{(2\pi)^{2m}}\ \zeta(2m)\ \ \     
\mbox{for all $m\ge1$},\\     
B^*_{2m+1}(0)&=&0\ \ \      
\mbox{for all $m\ge1$}.     
\end{eqnarray*}     
Here, as always, $\zeta$ is the Riemann zeta function.     

{}From these properties for $m\in[0,s]$ we conclude       
$$     
\|B^*_m\|_{H^s_*}^2=     
\sum_{\ell=0}^s\|[B^*_m]^{(\ell)}\|^2_{L_2}=     
\sum_{\ell=0}^m\|B^*_{m-\ell}\|^2_{L_2}=     
1+2\sum_{\ell=1}^m     
\frac{\zeta(2\ell)}{(2\pi)^{2\ell}}.     
$$     
Hence     
$$     
\|B^*_m\|_{H^s_*}^2\le      
1+\frac{2\zeta(2)}{(2\pi)^2}     
+\frac{2\zeta(4)}{4\pi^2(4\pi^2-1)}.     
$$     
Since $\zeta(2)=\pi^2/6$ and $\zeta(4)=\pi^4/90$, we conclude that     
\begin{equation}\label{eq:appendix2}     
\|B^*_m\|_{H^s_*}^2\le 1+\tfrac1{12}+\frac{\pi^2}{180(4\pi^2-1)}.     
\end{equation}     
We now consider $\il B^*_m,B^*_j\ir_{s,*}$     
for all $m,j\in[0,s]$ and $m\not= j$. Let    
$m'=\max\{m,j\}$ and  $j'=\min\{m,j\}$.    
Furthermore, let $\kappa_{m'-j'}=0$ for odd $m'-j'$     
and $\kappa_{m'-j'}=1$ for even  $m'-j'$. Then    
$$     
\il B^*_m,B^*_0\ir_{s,*}=\il B^*_m,B^*_0\ir_0=\delta_{m,0},     
$$     
whereas for $m,j\in[1,s]$ and $m\not=j$ we have      
\begin{eqnarray*}      
\il B^*_m,B^*_j\ir_{s,*}&=&\sum_{\ell=0}^{j'-1}\il     
B^*_{m-\ell},B^*_{j-\ell}\ir_0=     
\sum_{\ell=0}^{j'-1}(-1)^{j'-\ell-1}B^*_{m+j-2\ell}(0)\\    
&=&    
2\kappa_{m'-j'}(-1)^{(m'-j')/2}     
\sum_{\ell=0}^{j'-1}    
\frac{\zeta(m+j-2\ell)}     
{(2\pi)^{m+j-2\ell}}\\     
&=&     
2\kappa_{m'-j'}(-1)^{(m'-j')/2}     
\sum_{\ell=1}^{j'}    
\frac{\zeta(m'-j'+2\ell)}     
{(2\pi)^{m'-j'+2\ell}}.     
\end{eqnarray*}     
Note that the smallest argument of $\zeta$ for even $m'-j'$ is $4$.     
Therefore      
$$    
|\il B^*_m,B^*_j\ir_{s,*}|\le     
2\zeta(4)\,\frac{\kappa_{m'-j'}}{(2\pi)^{m'-j'}}     
\,\sum_{\ell=1}^\infty\frac1{(2\pi)^{2l}}\le     
\frac{2\zeta(4)}{4\pi^2-1}\,\frac{\kappa_{m'-j'}}{(2\pi)^{m'-j'}}.     
$$    
{}From this we have    
$$    
\sum_{m=0}^s|\il B^*_m,B^*_j\ir_{s,*}|\le    
\|B^*_j\|_{H^s_*}^2 +    
\frac{2\zeta(4)}{4\pi^2-1}\,   
\sum_{\substack{m=0\\m\neq j}}^s\frac{\kappa_{m'-j'}}{(2\pi)^{m'-j'}}.    
$$    
The non-zero terms correspond to even $m'-j'$ and each non-zero term    
$\kappa_{m'-j'}/(2\pi)^{m'-j'}$ may appear at most twice. Therefore     
\begin{equation}\label{eq:appendix3}     
\sum_{m=0}^s|\il B^*_m,B^*_j\ir_{s,*}|\le     
\|B^*_j\|_{H^s_*}^2 +    
\frac{4\zeta(4)}{4\pi^2-1}\,\sum_{\ell=0}^\infty    
\frac{1}{(2\pi)^{2\ell}}   
=\|B^*_j\|_{H^s_*}^2 + \frac{4\pi^4}{90(4\pi^2-1)^2}.    
\end{equation}     
We now consider the coefficients $\il B_j^*,e^*_h\ir_s$     
for $j=0,1,\dots,s$ and $h\in\Z\setminus0$.    
We start by showing that     
we have      
$$     
\int_0^1B_j^*(x)\,e^{-2\pi i\,hx}\,{\rm d}x=     
\begin{cases}\ 0\ \ &\ \ \ \mbox{for}\ \ j=0,\\     
\ \frac{-1}{(2\pi i\,h)^j}\ \ &\ \ \ \mbox{for}\ \ j\ge1.      
\end{cases}     
$$     
For $j=0$, it is zero since the integral of     
$e_h^*$ is zero for $h\neq0$.    
     
We now use induction on $j$. For $j=1$, we have     
$B_1^*(x)=x-\tfrac12$ and using integration     
by parts we get       
$$     
\int_0^1\left(x-\tfrac12\right)\,e^{-2\pi i\,hx}\,{\rm d}x=     
\frac{-1}{2\pi i\,h}     
\int_0^1\left(x-\tfrac12\right)\,{\rm d}\,e^{-2\pi i\,hx}=     
\frac{-1}{2\pi i\,h}\left(1-\int_0^1e^{-2\pi i\,hx}\,{\rm d}x\right)=     
\frac{-1}{2\pi i\,h},     
$$     
as claimed.      
     
For $j>1$, we again use integration by parts,     
and the property of Bernoulli polynomials~\eqref{ber0} and~\eqref{ber2}      
to obtain      
\begin{eqnarray*}     
\int_0^1B_j^*(x)\,e^{-2\pi i\,hx}\,{\rm d}x&=&     
\frac{-1}{2\pi i\,h}     
\int_0^1B_j^*(x)\,{\rm d}\, e^{-2\pi i\,hx}     
=     
\frac{1}{2\pi i\,h}\,     
\int_0^1[B_j^*]^{\prime}(x)\,e^{-2\pi i\,hx}\,     
{\rm d}x\\     
&=&     
\frac{1}{2\pi i\,h}\,\int_0^1B_{j-1}^*(x)\,     
e^{-2\pi i\ hx}\,{\rm d}x=\frac{-1}{(2\pi i\,h)^j},     
\end{eqnarray*}     
as claimed.     
     
{}From this we conclude that for $\ell\le j$ and $j\not=0$,     
$$     
\il [B_j^*]^{(\ell)},[e^{2\pi i\,h\cdot}]^{(\ell)}\ir_0=     
(-2\pi i\,h)^{\ell}\,\il B^*_{j-\ell},e^{2\pi i\,h\cdot}\ir_0=     
-(2\pi h)^{2\ell-j}\,i^{-j}.     
$$      
Clearly, for $j=0$ we have     
$\il [B_j^*]^{(\ell)},[e^{2\pi i\,h\cdot}]^{(\ell)}\ir_0=0$.     
     
Then for $h\neq0$ we have       
\begin{eqnarray*}     
\il B_j^*,e^*_h\ir_{s,*}&=&     
\sum_{\ell=0}^{s}     
\il [B_j^*]^{(\ell)},[e^*_h]^{(\ell)}\ir_0=     
\frac1{(2\pi|h|)^s}\,\sum_{\ell=0}^{j-1}     
\il [B_{j}^*]^{(\ell)},[e^{2\pi i\,h\cdot}]^{(\ell)}\ir_0\\     
&=&     
\begin{cases}     
\ 0\ \ &\ \ \mbox{for}\ \ \ j=0,\\     
\ \frac{-1}{(2\pi|h|)^s (2\pi i h)^j}\,     
\sum_{\ell=0}^{j-1}(2\pi h)^{2\ell}     
\ \ &\ \ \mbox{for}\ \ \ j\ge1.     
\end{cases}     
\end{eqnarray*}     
This allows us to compute      
$\abs{\il B_j^*,e^*_h\ir_{s,*}}$ for $h\neq0$ and $j=0,\dots,s$.     
For $j=0$ it is zero, and for $j\ge1$ we have     
\begin{equation}\label{eq:appendix4}    
\abs{\il B_j^*,e^*_h\ir_{s,*}} \,=\, \frac{1}{(2\pi |h|)^{s+j}}\     
\frac{(2\pi h)^{2j}-1}{(2\pi h)^2-1}     
\,<\, \frac{(2\pi |h|)^{j-s}}{(2\pi h)^2-1}    
\,\le\, \frac{4\pi^2}{4\pi^2-1}\ \frac1{(2\pi |h|)^{s-j+2}}.     
\end{equation}    
    
\bigskip    
    
We now are ready to bound $M_s$ from \eqref{eq:appendix_Ms}.    
For this we define    
\[    
M_{s,1} \,:=\, \max_{h\in\Z\setminus0}\left\{ \abs{\il e^*_h, e^*_h\ir_{s,*}}     
        \,+\, \sum_{j=0}^s \abs{\il B_j^*,e^*_h\ir_{s,*}} \right\}    
\]    
and    
\[    
M_{s,2} \,:=\, \max_{j=0,\dots,s}\left\{ \sum_{h\in\Z\setminus0} \abs{\il B_j^*,e^*_h\ir_{s,*}}     
        \,+\, \sum_{m=0}^s \abs{\il B_m^*,B_j^*\ir_{s,*}} \right\}    
\]    
such that $M_s=\max\{M_{s,1}, M_{s,2}\}$.     
    
Clearly, $M_s$ is strictly larger than one     
since the term inside of the maximum of     
$M_{s,2}$ corresponding to $j=0$ equals one.     
We now show that $M_s$ is close to one.     
    
{}From \eqref{eq:appendix1} and \eqref{eq:appendix4} we get    
\[\begin{split}    
M_{s,1} \;&\le\; \frac{4\pi^2}{4\pi^2-1}\,\left( 1     
        \,+\, \sum_{j=1}^s \frac1{(2\pi)^{s-j+2}} \right)     
\;\le\; \frac{4\pi^2}{4\pi^2-1}\,\left( 1     
        \,+\, \frac{1}{(2\pi)^2-2\pi} \right)     
\;\le\; 1.057.    
\end{split}\]    
We now bound $M_{s,2}$. We use \eqref{eq:appendix4} to bound     
the first sum in $M_{s,2}$,    
\[\begin{split}    
\sum_{h\in\Z\setminus0} \abs{\il B_j^*,e^*_h\ir_{s,*}}    
\;&\le\; \frac{4\pi^2}{4\pi^2-1}\, \sum_{h\in\Z\setminus0}    
\frac1{(2\pi |h|)^{s-j+2}}    
\;=\; \frac{8\pi^2}{4\pi^2-1}\, \frac{\zeta(s-j+2)}    
{(2\pi)^{s-j+2}}    
\\    
&\le\; \frac{8\pi^2}{4\pi^2-1}\,\frac{\zeta(2)}{(2\pi)^{2}}     
\;=\; \frac{\pi^2}{3(4\pi^2-1)}    
\;\le\; 0.0855.    
\end{split}\]

Using \eqref{eq:appendix2} and \eqref{eq:appendix3} we can bound     
the second sum in $M_{s,2}$ by    
$$    
\sum_{m=0}^s \abs{\il B_m^*,B_j^*\ir_{s,*}}     
\;\le\;     
 \tfrac{13}{12}+\frac{\pi^2}{180(4\pi^2-1)}     
+\frac{4\pi^4}{90(4\pi^2-1)^2}    
\;\le\; 1.0877.    
$$    
This shows that $M_{s,2}\le1.1732$ and, consequently, $M_s\le1.1732$    
and $\sqrt{M_s}\le    
1.0832\le\tfrac{13}{12}$. {}From~\eqref{eq:appendix_Ms}    
we finally obtain     
\[    
\|f\|_{H^s_*} \,\le\,\sqrt{M_s}\,\|f\|_{H^s}\le \tfrac{13}{12}\, \|f\|_{H^s}    
\]    
for all $f\in H^s$, as claimed

{\bf Acknowledgement.}       \ 
We thank Daan Huybrechs and two anonymous referees  
for valuable comments.

\end{document}